\theoremstyle{definition}
\newtheorem{defn}{Definition}
\newtheorem{thm}[defn]{Theorem}
\newtheorem{prop}[defn]{Proposition}
\newtheorem{lem}[defn]{Lemma}
\newtheorem{rem}[defn]{Remark}
\newtheorem{cf}[defn]{Example}
\newcommand{\beq}{\begin{eqnarray*}}
\newcommand{\eeq}{\end{eqnarray*}}
\newcommand{\rinj}{\rightarrowtail}
\newcommand{\rsurj}{\twoheadrightarrow}
\newcommand{\os}{\overset}
\newcommand{\ra}{\rightarrow}
\newcommand{\la}{\leftarrow}
\newcommand{\ol}{\overline}
\newcommand{\sSet}{\mathrm{Set}^{\Delta^{\mathrm{op}}}}
\newcommand{\C}{\mathscr{C}}
\newcommand{\D}{\mathscr{D}}
\newcommand{\E}{\mathscr{E}}
\newcommand{\Func}{\mathrm{Func}}
\newcommand{\Obj}{\mathrm{Obj}}
\newcommand{\intg}{\mathbb{Z}}
\newcommand{\nd}{\mathrm{nd}}
\newcommand{\Id}{\mathrm{Id}}
\newcommand{\colim}{\mathrm{colim}}
\newcommand{\Ar}{\mathrm{Ar}}
\newcommand{\Gap}{\mathrm{Gap}}
\newcommand{\real}{\mathbb{R}}
\newcommand{\Ch}{\mathrm{Ch}}
\title{Cofinality Theorems of \(\infty\)-Categories and Algebraic \(K\)-Theory}
\author{Hisato Matsukawa}
\date{} 
\begin{document}

\maketitle

\section*{Abstruct}
In this paper, we establish a theorem that proves a condition when an inclusion morphism between simplicial sets becomes a weak homotopy equivalence. Additionally, we present two applications of this result. The first application demonstrates that cofinal full inclusion functors of \(\infty\)-categories are weak homotopy equivalences. For our second application, we provide an alternative proof of Barwick's cofinality theorem of algebraic \(K\)-theory.

\section*{Acknowledgement}
I am deeply grateful to Prof.~Seidai~Yasuda for his guidance throughout the writing of this paper.

\tableofcontents

\section{Introduction}
It is a classical problem to investigate when an inclusion morphism \(A\to B\) between simplicial sets becomes a weak homotopy equivalence.
The main objective of this paper is to establish the theorem that gives this condition (Theorem \ref{the_main}).
To prove that, Theorem \ref{thm:finiteacyclification} plays crucial roles.
After that, we introduce two applications.
The first application is Theorem \ref{thm:app1} which states that cofinal full inclusion functors of \(\infty\)-categories are weak homotopy equivalence.
The second application is Theorem \ref{thm:TheCofinality_Wald}, providing another proof of Barwick's cofinality theorem of algebraic \(K\)-theory.
He used the additivity theorem to prove that, but we do not need it.

In section 2, we introduce a graph naturally associated with a simplicial set and investigate its properties, particularly its acyclicity.
For any finite simplicial set \(X\), there exists a simplicial set \(\widetilde{X}\) weak homotopy equivalent to \(X\), and the associated graph with \(\widetilde{X}\) is acyclic (see Theorem \ref{thm:finiteacyclification}).
This theorem plays a central role in proving Barwick's cofinality theorem.
When proving weak homotopy equivalence between a simplicial set \(Z\) and its subsimplicial set \(Y\), it is convenient to restrict ourselves to a finite subsimplicial set \(X\) of \(Z\), and construct homotopy from \(X\subset Z\) to \(X\to Y\subset Z\).
The cycles of the associated graph can pose obstacles to constructing this homotopy, but the Theorem \ref{thm:finiteacyclification} eliminates this hindrance.

In section 3, we prove our main theorem (Theorem \ref{the_main}).

In section 4, we introduce two applications.
The first application is Theorem \ref{thm:app1}, which states a cofinal full inclusion functor \(\D\to \C\) of \(\infty\)-categories induces a homotopy equivalence between their geometric realizations.
As the second application, We give another proof of Barwick's cofinality theorem of algebraic \(K\)-theory.
He used the additivity theorem to decompose the path space, but we do not need to do that.
Strictly, our statement is for labeled Waldhausen categories, not for Waldhausen categories.
Hence, our statement is a bit stronger than Barwick's.

Cofinality of \(K\)-theory is a classical problem with numerous known results.
We show the relations between them in Appendix \ref{append}.
In particular, Barwick's cofinality theorem is stronger than other many cofinality theorems.


\subsection{Notations}
We clarify some notations.
In this paper, the term \(\infty\)-category refers to quasi-category.
All sets, categories, and \(\infty\)-categories we consider are assumed to be small.
The symbol \(|X|\) denotes the geometric realization of a simplicial set \(X\), and \(N\C\) denotes the nerve simplicial set of a category \(\C\).
Occasionally, we may conflate a simplex \(a\in X_n\) of a simplicial set \(X\) with a morphism \(\Delta^n\to X\) corresponding to \(a\), and denote the morphism by \(a\) for convenience.

\section{Graphs associated with simplicial sets and their acyclicity}
Naturally, there is a graph associated with a simplicial set.
In this section, we investigate the properties of this graph, particularly focusing on its acylicity.

The purpose of this section is to prove Theorem \ref{thm:finiteacyclification}.
We consider operating on simplicial sets.
If the directed graph associated with the simplicial set is acyclic, we can carry out the operation inductively.
This theorem is used to prove cofinality theorems later.

First, let us recall the definition of a graph.
A directed graph \( G \) is a tuple \( G = (V, E, s, t) \), where \( V, E \) are sets, and \( s, t : E \to V \) are maps from \( E \) to \( V \).
An element of \( V \) is called a vertex, and an element of \( E \) is called an edge of \(G\).
The vertices \(s(e),t(e)\) are called the source and target of an edge \(e\in E\), respectively.
Hereafter, we will refer to a directed graph simply as a graph.

Let \( G = (V, E, s, t) \) be a graph.
A trail of \( G \) is a sequence \( (e_1, \ldots, e_n) \  (e_i \in E,\  1 \leq i \leq n) \) that has a length greater than or equal to 1, satisfying the condition \( t(e_i) = s(e_{i+1}) \) for all \( 1 \leq i \leq n-1 \).
Trails of length 0 are not considered in this paper.
A trail \( (e_1, \ldots, e_n) \) is said to be closed if \( s(e_1) = t(e_n) \). If \( G \) does not have any closed trails, \( G \) is said to be acyclic.

Let \( X \) be a simplicial set.
Let \( X_n^\nd \subset X_n \) be the set of non-degenerate \( n \)-simplices of \( X \), and let \( X^\nd = \coprod_n X_n^\nd \) be the set of all non-degenerate simplices of \( X \).

\begin{defn}
Let \( X \) be a simplicial set.
We set \( V_X = X_0 \), \( E_X = X_1^\nd \), \( s = d_1 \), and \( t = d_0 \).
Then, \( G_X = (V_X, E_X, s, t) \) is a graph.
We call \( G_X \) the graph associated with \( X \).
\end{defn}

\begin{defn}
We say a simplicial set \( X \) is acyclic if the graph \( G_X \) associated with \( X \) is acyclic. 
\end{defn}
This definition is not related to homotopy groups, homology groups, or similar concepts.
For any finite simplicial set \( X \), there exists an acyclic simplicial set weak homotopy equivalent to \( X \) (Theorem \ref{thm:finiteacyclification}).

\begin{cf}
  \begin{itemize}
    \item[(1)] The simplicial set \(\Delta^0\) is acyclic. The graph associated with \(\Delta^0\) has one vertex and no edges. We denote this graph by \(G_1\).
    \item[(2)] The simplicial set \(\Delta^1/\partial\Delta^1\) is not acyclic. The graph associated with this simplicial set has one vertex and a unique loop edge. This graph is a terminal object in the category of graphs.
    \item[(3)] For any \(n\ge 2\), the graph associated with \(\Delta^n/\partial\Delta^n\) is \(G_1\). Hence it is acyclic.
    \item[(4)] The graph associated with a simplicial set \(X\) is connected as an undirected graph if and only if \(X\) is connected.
    \item[(5)] The graph associated with a non-empty connected Kan complex is \(G_1\) or not acyclic. 
    \item[(6)] Let \(X\) be the nerve simplicial set of a partially ordered set \(S\). Then the graph associated with \(X\) can be written as follows. The set of vertices is the set of objects of \(S\). There is an edge from \(s\) to \(t\) if and only if \(s<t\) in \(S\). Moreover, there exists a trail (of length greater than or equal to \(1\)) from \(s\) to \(t\) if and only if \(s<t\). Hence \(X\) is acyclic.
    \item[(7)] Let \(X\) be the nerve simplicial set of a preordered set \(S\). Then \(G_X\) is acyclic if and only if \(S\) satisfies the antisymmetry axiom.
  \end{itemize}
\end{cf}

The following lemma is fundamental.

\begin{prop}\label{prop:acyclic}
\begin{itemize}
\item[(1)] The simplicial set \( \Delta^n \ (n \geq 0) \) is acyclic.
\item[(2)] Let \( X \) be an acyclic simplicial set, and let \( Y \subset X \) be a subsimplicial set of \( X \). Then, \( Y \) is also acyclic. 
\item[(3)] Let \( \{X_i\}_{i \in I} \) be a family of acyclic simplicial sets.
Then, the product \( \prod_{i \in I} X_i \) is also acyclic.
Moreover, with (2), we conclude that acyclic simplicial sets are closed under limits in the category of simplicial sets \( sSet \).
\item[(4)] Under the same condition as (3), the coproduct \( \coprod_{i \in I} X_i \) is also acyclic.
\item[(5)] Let \( X, Y, Z \) be simplicial sets, with \( Y \) and \( Z \) being acyclic.
If there exists a diagram of the form \( Y \la X \ra Z \), then \( Y \coprod_{X \times \{0\}} X \times \Delta^1 \coprod_{X \times \{1\}} Z \) is acyclic.
\item[(6)] Let \( X = \colim_{i \in I} X_i \) be a filtered colimit of simplicial sets.
If \( X_i \)'s are acyclic, then so is \( X \).
\end{itemize}
\end{prop}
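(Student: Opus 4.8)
The plan is to dispatch the six assertions roughly in order of difficulty, since the easy ones and one projection lemma do most of the work. I will use repeatedly the elementary fact that for a subsimplicial set $Y\subset X$ one has $Y_1^\nd\subset X_1^\nd$: if $y\in Y_1$ equals $s_0 v$ in $X$, then $v=d_0 y\in Y_0$ and already $y=s_0 v$ in $Y$, so $y$ is degenerate in $Y$; contrapositively a $1$-simplex non-degenerate in $Y$ is non-degenerate in $X$. Thus $G_Y$ is a subgraph of $G_X$, and every closed trail of $G_Y$ is one of $G_X$, giving (2) at once. For (1) I would observe that the non-degenerate $1$-simplices of $\Delta^n$ are the pairs $i<j$ with source $i$ and target $j$, so any trail strictly increases the vertex value and no closed trail exists (equivalently $\Delta^n=N[n]$, the poset example). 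For (4), the vertex sets of the summands are disjoint in $\coprod_i X_i$ and no edge joins two components, so any trail, in particular any closed one, lies in a single $G_{X_i}$.

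The first genuinely combinatorial point is (3), where I would prove the lemma that a closed trail in a product projects to a closed trail in some factor. Write a closed trail as $(E^1,\dots,E^m)$ with $E^k=(e^k_i)_{i\in I}$ and record the vertices $v^k_i=d_0 e^k_i=d_1 e^{k+1}_i$ visited in factor $i$, with $v^0_i=v^m_i$ by closedness. For fixed $i$, each coordinate $e^k_i$ is either non-degenerate or a degeneracy $s_0 v^{k-1}_i$, in which case $v^{k-1}_i=v^k_i$ so the vertex does not move. Deleting the degenerate coordinates leaves $(e^{k_1}_i,\dots,e^{k_r}_i)$; the constancy of the vertex across deleted steps shows this is a trail, and $v^0_i=v^m_i$ shows it is closed as soon as $r\geq 1$. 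Since a $1$-simplex of $\prod_i X_i$ is non-degenerate exactly when some coordinate is, the edge $E^1$ has a non-degenerate coordinate, forcing $r\geq 1$ in that factor and producing a closed trail in $G_{X_i}$, a contradiction. Closure under arbitrary limits then follows because a limit is a subsimplicial set of the product, so (2) applies.

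The step I regard as the heart of the proposition is (5), which I would handle via the evident simplicial map $p\colon M\to\Delta^1$ out of the double mapping cylinder $M=Y\coprod_{X\times\{0\}}X\times\Delta^1\coprod_{X\times\{1\}}Z$, given by the constant maps $Y\to\{0\}$, $Z\to\{1\}$ and the projection on $X\times\Delta^1$; its fibres are $p^{-1}(0)=Y$ and $p^{-1}(1)=Z$. A non-degenerate edge of $M$ maps under $p$ to the degenerate edge at $0$ (then it is an edge of $Y$), to the degenerate edge at $1$ (an edge of $Z$), or to the unique non-degenerate edge $0\to1$ of $\Delta^1$ (a ``cross'' edge from a vertex over $0$ to a vertex over $1$). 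Reading off the values in $\{0,1\}$ of the vertices visited along a closed trail of $G_M$, each edge can only keep the value or raise it from $0$ to $1$, and since the first and last values agree the sequence is constant; hence the trail uses no cross edge and lies entirely in $G_Y$ or in $G_Z$, which acyclicity of $Y$ and $Z$ forbids. This is exactly where the hypotheses live: $X$ itself need not be acyclic, because its two ends are absorbed into the acyclic $Y$ and $Z$ while the monotone projection to $\Delta^1$ keeps the connecting edges cycle-free.

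Finally, (6) is a filteredness argument. A closed trail of $G_X$ involves only finitely many simplices and finitely many incidence relations, so since $X=\colim_i X_i$ is filtered they all lift to a single stage, and the relations $t(e_k)=s(e_{k+1})$ and $s(e_1)=t(e_m)$ hold after passing to a further stage; non-degeneracy is preserved since a degenerate representative would force a degenerate image, yielding a closed trail in some $G_{X_i}$ and a contradiction. The main obstacle I anticipate is the bookkeeping in (3): correctly managing the degenerate coordinates when projecting a product trail and verifying that both the trail and the closedness conditions survive the deletion of degenerate steps. By contrast the conceptual content of (5) is light once $p\colon M\to\Delta^1$ is in hand, though one must check carefully that the fibres are genuinely $Y$ and $Z$ and that no edge runs backwards from $1$ to $0$.
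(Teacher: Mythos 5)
Your proof is correct and follows essentially the same route as the paper: the paper declares (1)--(4) obvious (flagging exactly the degenerate-coordinate subtlety in products that your deletion argument resolves), proves (5) by observing that the vertices of the double mapping cylinder split as \(Y_0\coprod Z_0\) with no edges from the \(Z\)-side to the \(Y\)-side and that the spanned subsimplicial sets are precisely \(Y\) and \(Z\) --- which is exactly what your projection \(p\colon M\to\Delta^1\) encodes --- and proves (6) by the same lifting-of-a-finite-closed-trail argument. The only real difference is that you supply in full the details the paper leaves implicit.
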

\begin{proof}
(1)--(4) Obvious.
We need to exercise a little care with (3).
Generally, products do not commute with taking the associated graph, i.e., the equality \( G_{\prod_{i \in I} X_i} = \prod_{i \in I} G_{X_i} \) does not hold in general cases since morphisms in the category of graphs do not allow edges to be degenerate.

(5) The set of vertices of \( K = Y \coprod_{X \times \{0\}} (X \times \Delta^1) \coprod_{X \times \{1\}} Z \) is of the form \( K_0 = Y_0 \coprod Z_0 \).
There are no edges from a vertex of \( Z \) to a vertex of \( Y \) in \( K \).
We can reduce our statement to showing that the subsimplicial sets of \( K \) spanned by the set of vertices of \( Y \) and those spanned by the set of vertices of \( Z \) are acyclic, which are precise \( Y \) and \( Z \), respectively.

(6) If \( X \) is not acyclic, then there exists a closed trail \( T = (e_1, \ldots, e_n) \) in \( G_X \).
This closed trail \( T \) lifts to \( X_i \) as a closed trail for some \( i \in I \), which contradicts the hypothesis that \( X_i \) is acyclic.
\end{proof}

\begin{rem}
We remark on the definitions of the subsimplicial set spanned by a subset of the set of vertices.
Let \(K\) be a simplicial set, and \(S\) be a subset of the set of its vertices.
The subsimplicial set spanned by \(S\) in \(K\) is the subsimplicial set \(K_S\subset K\) that is a simplex \(s\in K_n\) in \(K\) is a simplex in \(K_S\) if and only if its all vertices contained in \(S\).
\end{rem}

A class of acyclic simplicial sets is closed under limits in \(\sSet\) but is not under colimits.
We give a counterexample.

\begin{cf}
We set \(X=\partial \Delta^1, Y=Z=\Delta^1\).
We define a morphism \(X\to Y\) by mapping \(0\mapsto 0,1\mapsto 1\), and  \(X\to Z\) by mapping \(0\mapsto 1,1\mapsto 0\).
The simplicial sets \(X,Y,Z\) are acyclic but \(Y\coprod_{X}Z\) is not.
\[
\xymatrix{
0\ar@/^4pt/[r]&1\ar@/^5pt/[l]
}
\]
\end{cf}

Let \(f:X\to Y\) be a morphism of simplicial sets, and \(a\in Y_n\) be an \(n\)-simplex of \(Y\).
We denote the pull back of \(X\) by \(a:\Delta^n\to Y\) via \(f/(a,n)=X\prod_{Y,a}\Delta^n\).

\begin{lem}
Let \(f:X\to Y\) be a morphism of simplicial sets, and \(d\ge 0\) be a natural number.
The following conditions are equivalent.
\begin{itemize}
\item[(1)] The fiber \(f/(a,n)\) is contractible for any natural number \(n\le d\) and non-degenerate \(n\)-simplex \(a\in Y_n\).
\item[(2)] The fiber \(f/(a\circ s,m)\) is contractible for any natural number \(n\le d\),\ non-degenerate \(n\)-simplex \(a\in Y_n\),\ natural number \(m\ge n\), and surjection \(s:\Delta^m\to \Delta^n\).
\item[(3)] The base change morphism \(X\prod_{Y}Z\to Z\) is weak homotopy equivalence for any simplicial set \(Z\) of dimension less than or equal to \(d\) and any morphism \(Z\to Y\).
\end{itemize}
\end{lem}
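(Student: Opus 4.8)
The plan is to show the three conditions equivalent by proving the cycle (3) $\Rightarrow$ (1) $\Rightarrow$ (2) $\Rightarrow$ (3). The implication (3) $\Rightarrow$ (1) is immediate: for a non-degenerate $a\in Y_n$ with $n\le d$, take $Z=\Delta^n$ (of dimension $n\le d$) with structure map $a$ in (3); then $f/(a,n)=X\prod_{Y,a}\Delta^n\to\Delta^n$ is a weak homotopy equivalence, and since $\Delta^n$ is contractible, so is $f/(a,n)$. (The implication (2) $\Rightarrow$ (1), taking $m=n$ and $s=\id$, is equally trivial but is not needed to close the cycle.)

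For (1) $\Rightarrow$ (2) the point is that a surjection $s\colon\Delta^m\to\Delta^n$ induced by a surjective monotone map $\sigma\colon[m]\to[n]$ is a deformation retraction compatibly with $s$. I would set $\tau(i)=\max\sigma^{-1}(i)$, so that $\sigma\tau=\id$ and $\id_{[m]}\le\tau\sigma$, i.e. $\sigma\dashv\tau$; the unit $\id\le\tau\sigma$ gives a simplicial homotopy $h\colon\Delta^m\times\Delta^1\to\Delta^m$ from $\id$ to $t\circ s$, with $t=N\tau$. Because $\sigma$ applied to the unit is the identity, $h$ lies over $\Delta^n$, that is $s\circ h=s\circ\mathrm{pr}$. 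Writing $P=f/(a,n)$ with projection $p\colon P\to\Delta^n$ and $Q=f/(a\circ s,m)=P\prod_{\Delta^n,s}\Delta^m$, the section $t$ base-changes along $p$ to a section $j$ of the projection $q\colon Q\to P$, while $h$ base-changes to a homotopy $\id_Q\simeq j\circ q$. Hence $q$ is a homotopy equivalence, $Q\simeq P$, and contractibility of $P$ (from (1)) yields contractibility of $Q$, which is (2).

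The substantial implication is (2) $\Rightarrow$ (3), which I would prove by induction on $d$, attaching all top-dimensional simplices at once and invoking the gluing lemma for the proper Kan--Quillen model structure. For $d=0$ the simplicial set $Z$ is discrete and $X\prod_Y Z=\coprod_z f/(z,0)$ is a coproduct of contractibles mapping to points. For the inductive step, present $Z=\sk_d Z$ as the pushout of the span $\sk_{d-1}Z\leftarrow\coprod_b\partial\Delta^d\to\coprod_b\Delta^d$ over the non-degenerate $d$-simplices $b$ of $Z$. Since base change $X\prod_Y(-)$ preserves colimits (universality of colimits in $\sSet$), applying it produces a parallel pushout together with a comparison map of spans. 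The comparison is a weak equivalence on $\sk_{d-1}Z$ and on $\coprod_b\partial\Delta^d$ (both of dimension $\le d-1$) by the inductive hypothesis, and on $\coprod_b\Delta^d$ because each $X\prod_Y\Delta^d=f/(\bar b,d)$ is contractible by (2): the image $\bar b\in Y_d$ factors as $\bar b=a\circ s$ with $a$ non-degenerate of dimension $n\le d$ and $s$ surjective, which is exactly the hypothesis of (2). As the leg $\coprod_b\partial\Delta^d\to\coprod_b\Delta^d$ and its base change are monomorphisms, the gluing lemma upgrades these corner equivalences to a weak equivalence on pushouts, namely $X\prod_Y Z\to Z$.

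The main obstacle is the corner $\coprod_b\Delta^d$ in this induction: the $d$-simplices $b$ of $Z$ generally map to \emph{degenerate} simplices of $Y$, so the fibers appearing there are fibers over degenerate simplices, about which condition (1) says nothing. This is precisely why the ostensibly stronger (2) is required, and the whole purpose of the equivalence (1) $\Leftrightarrow$ (2) is to make the non-degenerate hypothesis usable in the geometric induction. The remaining points — that base change commutes with the relevant colimits, and that the comparison of pushouts satisfies the hypotheses of the gluing lemma (one cofibration leg, three weak equivalences) — are routine, since every object of $\sSet$ is cofibrant and monomorphisms are cofibrations in the left proper Kan--Quillen structure.
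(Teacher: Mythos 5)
Your proof is correct, and it closes the two nontrivial implications by different means than the paper. For \((2)\Rightarrow(3)\) the paper simply cites Waldhausen's lemma (\cite[Lemma 1.4.B.]{Wa}): after the Eilenberg--Zilber factorization \(\bar b=a\circ s\), condition (2) says exactly that every fiber of \(X\prod_Y Z\to Z\) over a simplex of \(Z\) is contractible, and Waldhausen's lemma then yields the weak equivalence. Your skeletal induction with the gluing lemma is in effect a direct proof of the dimension-bounded case of that lemma; it is longer but self-contained, and it makes visible why the bound \(\dim Z\le d\) in (3) matches the fiber conditions for \(n\le d\): the induction terminates at stage \(d\) precisely because every non-degenerate top simplex of \(Z\) maps to a simplex of \(Y\) whose non-degenerate core has dimension at most \(d\), so no colimit argument over infinitely many skeleta is needed. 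For \((1)\Rightarrow(2)\) the paper says only that \(|f/(a,n)|\) and \(|f/(a\circ s,m)|\) are homotopic; your construction of the section \(\tau(i)=\max\sigma^{-1}(i)\) and the fiberwise homotopy coming from the unit \(\id\le\tau\sigma\), pulled back along \(f/(a,n)\to\Delta^n\) to exhibit \(f/(a\circ s,m)\to f/(a,n)\) as a deformation retraction, is precisely the argument that one-liner suppresses. Your cycle \((3)\Rightarrow(1)\Rightarrow(2)\Rightarrow(3)\) establishes the equivalence just as well as the paper's four separate implications. The trade-off is the expected one: the paper's proof is three lines resting on an external theorem, while yours is elementary and complete at the cost of invoking universality of colimits in \(\sSet\) and the gluing lemma, both of which are indeed routine in the left proper Kan--Quillen structure where every object is cofibrant and monomorphisms are cofibrations.
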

\begin{proof}
\((2)\Rightarrow (1),(3)\Rightarrow (1)\) Obvious.

\((2)\Rightarrow (3)\) Follows from Waldhausen's theorem (\cite[Lemma 1.4.B.]{Wa}).

\((1)\Rightarrow (2)\) The fibers \(|f/(a,n)|\) and \(|f/(a\circ s,m)|\) are homotopic.
\end{proof}

\begin{thm}\label{thm:finiteacyclification}
Let \(X\) be a finite simplicial set.
Then there exists an acyclic finite simplicial set \(\widetilde{X}\) and a morphism \(\pi:\widetilde{X}\to X\) satisfying the following condition; The fiber \(\pi/(a,n)\) is contractible for any \(n\ge 0\) and \(a\in X_n\).

Especially, any base change of \(\pi\) is a weak homotopy equivalence.
\end{thm}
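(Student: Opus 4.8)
The plan is to build $\widetilde{X}$ by induction on the non-degenerate simplices of $X$, maintaining at each stage an acyclic finite simplicial set that maps to the portion of $X$ constructed so far with contractible fibers over every simplex. Granting such a $\pi$, the final ``especially'' clause is immediate: by the preceding Lemma the fiber condition (1) is equivalent to (3), so that $\pi$ and, more generally, every base change of $\pi$ is a weak homotopy equivalence. Moreover, the same Lemma shows it suffices to verify contractibility of $\pi/(a,n)$ for \emph{non-degenerate} $a$, so the entire problem reduces to the inductive construction.

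For the induction I would write $X$ as an iterated pushout, attaching one non-degenerate simplex at a time in non-decreasing order of dimension, say $X'' = X' \coprod_{\partial\Delta^k} \Delta^k$ along an attaching map $\alpha\colon\partial\Delta^k\to X'$. Suppose $\pi'\colon\widetilde{X'}\to X'$ is already built, acyclic and finite, with contractible fibers. Pulling back along $\alpha$ gives $B = \widetilde{X'}\times_{X'}\partial\Delta^k$, a subsimplicial set of $\widetilde{X'}$, hence acyclic by Proposition~\ref{prop:acyclic}(2); by the preceding Lemma the map $B\to\partial\Delta^k$ is a weak equivalence. Over $\Delta^k$ I would then attach a \emph{contractible} acyclic piece $C$ restricting to $B$ on the boundary, taking $C$ to be a cone on $B$ (adjoining a single barycentric vertex, with every new edge oriented away from it). A cone is contractible, which is what forces the new fiber to be contractible, and orienting all new edges out of the cone point keeps edges dimension-increasing, so $C$ stays acyclic. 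To glue $C$ back onto $\widetilde{X'}$ along $B$ without creating a closed trail, I would mediate the gluing through the double mapping cylinder of Proposition~\ref{prop:acyclic}(5), and use Proposition~\ref{prop:acyclic}(4) to attach all simplices of a fixed dimension at once.

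It then remains to verify that both invariants persist. Acyclicity of the enlarged object follows from Proposition~\ref{prop:acyclic}(4),(5), given that $\widetilde{X'}$ and each $C$ are acyclic; finiteness is clear. For the fiber condition one checks that over a simplex already present in $X'$ the fiber is altered only by the interjected contractible cylinder, so it remains contractible by the inductive hypothesis, whereas over the newly attached non-degenerate $k$-simplex the fiber is a cone, hence contractible.

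The hard part is precisely the gluing step, and this is where the hypothesis of the theorem really bites: a non-degenerate simplex of $X$ may identify several of its own faces, and it is exactly such folding that produces closed trails in $G_X$. The purpose of the interval in Proposition~\ref{prop:acyclic}(5) is to keep the attachment ``spread out'' enough that these identifications never close up into a cycle, while leaving the homotopy type undisturbed; reconciling this separation with the demand that the fibers over the identified faces stay contractible is the delicate point. (A more conceptual alternative is to take $\widetilde{X} = \mathrm{Sd}(X)$ with $\pi$ the last-vertex map: since every non-degenerate edge of $\mathrm{Sd}(X)$ runs from a non-degenerate simplex of $X$ to a proper coface of strictly larger dimension, no trail can close up and $\mathrm{Sd}(X)$ is automatically acyclic. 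The cost is then shifted entirely onto showing that the strict fibers $\mathrm{Sd}(X)\times_X\Delta^n$ are contractible, which is subtle because $\mathrm{Sd}$ does not preserve pullbacks.)
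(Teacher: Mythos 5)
Your proposal shares the paper's broad toolkit (induction over non-degenerate simplices, mapping cylinders, Proposition~\ref{prop:acyclic}(5) for acyclicity, and the lemma preceding the theorem to reduce to non-degenerate simplices and to pass from fibers to base changes), but it has a genuine gap at the decisive step --- one you flag yourself as ``the delicate point'' and then leave unresolved. The claim that ``over the newly attached non-degenerate \(k\)-simplex the fiber is a cone, hence contractible'' is false whenever the attaching map \(\alpha\colon\partial\Delta^k\to X'\) is non-injective. The fiber over the new simplex is the pullback \(\widetilde{X''}\prod_{X'',\sigma}\Delta^k\) along the characteristic map \(\sigma\), and since \(\sigma|_{\partial\Delta^k}=\alpha\), this pullback contains not only your cone \(C\) but also \(B=\widetilde{X'}\prod_{X'}\partial\Delta^k\), cylinder pieces indexed by the kernel pair \(\partial\Delta^k\prod_{X'}\partial\Delta^k\), and extra copies of parts of \(C\). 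Concretely, for \(X=\Delta^1/\partial\Delta^1\) built from \(\Delta^0\) by attaching one \(1\)-simplex, your construction gives a \(\widetilde{X}\) with four vertices and four edges, and the fiber over the \(1\)-simplex is a graph with eight vertices and seven edges; it happens to be a tree, hence contractible, but it is not the three-vertex cone, and nothing in your argument establishes its contractibility. When \(\alpha\) is injective the kernel pair is trivial and your identification of the fiber is essentially correct; but the non-injective case --- a simplex whose faces get identified --- is exactly what makes the theorem non-trivial, and it already occurs in the simplest examples \(\Delta^n/\partial\Delta^n\).

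The paper closes this gap with two ideas missing from your proposal. First, the inductive invariant is \emph{relative}, not absolute: what is carried through the induction is contractibility of \(\pi_x/(x,n)\coprod_{K_x\prod_{X,x}\partial\Delta^n}\partial\Delta^n\), the fiber with its part over the boundary collapsed onto \(\partial\Delta^n\). This quotient kills precisely the kernel-pair contributions, so the relative fiber created by gluing in the mapping cylinder of (entire current fiber over \(x\)) \(\to\Delta^n\) is visibly homotopy equivalent to \(\Delta^n\). (Relatedly, the paper cones off the \emph{whole} current fiber over the closed simplex \(x\), not just the part over \(\partial\Delta^k\), and runs the induction from top dimension \emph{down}, starting from the disjoint union \(\coprod_{x\in X^{\nd}}\Delta^{\dim x}\), so that the relative conditions already verified over higher-dimensional simplices are undisturbed.) Second, absolute contractibility of the fibers is recovered only at the very end by a separate induction on dimension from below: once all fibers over simplices of dimension \(<n\) are contractible, the lemma gives that \(\widetilde{X}\prod_{X,x}\partial\Delta^n\to\partial\Delta^n\) is a weak homotopy equivalence, so collapsing it does not change the weak homotopy type, and the relative statement yields the absolute one. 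Your skeleton-by-skeleton construction could plausibly be repaired along these lines, but some such relative bookkeeping is indispensable; as written (and likewise for your subdivision alternative, whose strict-fiber contractibility you also leave unproved), the proposal does not prove the fiber condition.
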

\begin{proof}
First, we provide a rough sketch of the proof.
We begin by decomposing \(X\) into a disjoint union of its non-degenerate simplicies.
When we patch these simplices together in a natural way, we get \(X\) but it is not acyclic.
Therefore, we patch them together with redundancy.
 We obtain a suitable \(\widetilde{X}\) by eliminating the redundancy, which will be weak homotopy equivalent to \(X\).
 
 We give a total order on the set \(X^{\nd}\) of non-generate simplices in \(X\) such that \(x<y\) if \(\dim x< \dim y\).
We construct \(\widetilde{X}\) inductively on \(X^{\nd}\coprod\{\infty\}\) from larger to smaller.
We have the following information as induction hypotheses; simplicial set \(K_x\), morphism \(\pi_x:K_x\to X\), and \(\phi_{x,x'}:K_{x'}\to K_x\) for \(x'>x\ (x'\in X^\nd\coprod\{\infty\})\).
We also have the following conditions as induction hypotheses;
\begin{itemize}
\item[(1)] The simplicial set \(K_x\) is finite and acyclic.
\item[(2)] The simplicial set \(\pi_x/(x,n)\coprod_{K_x\prod_{X,x}\partial\Delta^n}\partial\Delta^n\) is contractible, where \(n\) is the dimension of \(x\).
\item[(3)] The equality \(\pi_x\circ \phi_{x,x'}=\pi_{x'}\) holds for all \(x'>x\ (x'\in X^\nd\coprod\{\infty\})\).
\item[(4)] The equality \(\phi_{x,x'}\circ \phi_{x',x''}=\phi_{x,x''}\) holds for all \(x''>x'>x\ (x',x''\in X^\nd\coprod\{\infty\})\).
\item[(5)] The morphism \(\pi_{x'}/(x',m)\coprod_{K_{x'}\prod_{X,x'}\partial\Delta^m}\partial\Delta^m \to \pi_x/(x',m)\coprod_{K_x\prod_{X,x'}\partial\Delta^m}\partial\Delta^m\) induced by \(\phi_{x,x'}\) is an isomorphism, where \(m\) is the dimension of \(x'\) for all \(x'>x\ (x'\in X^\nd)\).
\end{itemize}

In the case \(x=\infty\), we define \(K_\infty\) as \(\coprod_{x\in X'}\Delta^{\dim x}\), and \(\pi_\infty\) as the natural morphism \(K_\infty\os{(x)_{x\in X'}}{\to}Z \).
It is evident that \(K_\infty\) and \(\pi_\infty\) satisfy the above conditions (1)--(5).

Next, we consider the case where \(x\in X^\nd\).
Let \(x'\in X^\nd\coprod \{\infty\}\) be the minimum element among the elements greater than \(x\).
Set \(n=\dim x\).
Define \(K_{x}\) by the pushout diagram below.
\[
\xymatrix{
\pi_{x'}/(x,n)\ar[r]\ar[d]\ar@{}[rd]|{\mathrm{PO}}&\pi_{x'}/(x,n)\times\Delta^1\coprod_{\pi_{x'}/(x,n)\times\{1\}}\Delta^n\ar[r]\ar[d]&\Delta^{n}\ar[d]^{x}\\
K_{x'}\ar[r]^{\phi_{x,x'}}\ar@/_15pt/[rr]_{\pi_{x'}}&K_x\ar@{..>}[r]^{\exists!\pi_x}&X
}
\]
The outer rectangle is a Cartesian diagram.
The top horizontal arrow on the left is a map to \(\pi_{x'}/(x,n)\times\{0\}\).
We define \(\pi_x:K_x\to X\) as the morphism that makes the above diagram commutes.
Define \(\phi_{x,x''}\) by \(\phi_{x,x'}\circ \phi_{x',x''}\) for \(x''\in X^\nd\coprod \{\infty\}\) such that \(x''>x'\).

We show that \(K_x,\pi_x,\phi_{x,x''}\ (x''>x)\) satisfy conditions (1)--(5).
Condition (1) follows from Proposition \ref{prop:acyclic}.
Conditions (3) and (4) are clear from the construction.
Let \(L=\pi_{x'}/(x,n)\times\Delta^1\coprod_{\pi_{x'}/(x,n)\times\{1\}}\Delta^n\).
We will show condition (2).
\(L\coprod_{L\prod_{\Delta^n}\partial\Delta^n}\partial\Delta^n\) is homotopic to \(\Delta^n\),\ so it is contractible.
We show that the natural morphism \(f:L\coprod_{L\prod_{\Delta^n}\partial\Delta^n}\partial\Delta^n\to \pi_x/(x,n)\coprod_{K_x\prod_{X,x}\partial\Delta^n}\partial\Delta^n\) is an isomorphism.
It is sufficient to show that the geometric realization \(|f|\) of \(f\) is bijective because a morphism of simplicial sets is an isomorphism if and only if its geometric realization is bijective.
The morphism \(f\) is a morphism on \(\Delta^n\).
We will see that \(f\) is bijective on the fiber above any point of \(|\Delta^n|\).
On the boundary of \(\Delta^n\),\ the morphism is nothing but the identity map on \(|\partial\Delta^n|\), so it is bijective.
Let \(p'\in |\Delta^n|\) be an inner point, and \(p\) be its image in \(|X|\).
\[
\xymatrix{
|\pi_{x'}/(x,n)|\prod_{|X|}\{p\}\ar[r]\ar[d]\ar@{}[rd]|{\mathrm{PO}}&|L|\prod_{|X|}\{p\}\ar[r]\ar[d]&|\Delta^n|\prod_{|X|}\{p\}=\{p'\}\ar[d]\\
|K_{x'}|\prod_{|X|}\{p\}\ar[r]&|K_x|\prod_{|X|}\{p\}\ar[r]&\{p\}
}
\]
This diagram is a pullback of the previous one by \(\{p\}\to |X|\).
Since \(x\) is non-degenerate, the right vertical arrow is bijective.
The outer rectangle is a Cartesian diagram, implying that the left vertical arrow is also bijective.
Furthermore, the left rectangle is a pullback of pushout along an injection, so it is also a pushout diagram.
Thus, we conclude that the middle vertical arrow is bijective.
The following isomorphisms show that \(|f|\) is bijective on the fiber at \(p'\).
\[|L|\prod_{|X|}\{p\}\cong|L|\prod_{|\Delta^n|}\{p'\}\cong|L\coprod_{L\prod_{\Delta^n}\partial\Delta^n}\partial\Delta^n|\prod_{|\Delta^n|}\{p'\}\]
\[
|K_x|\prod_{|X|}\{p\}\cong|\pi_x/(x,n)|\prod_{|\Delta^n|}\{p'\}\cong|\pi_x/(x,n)\coprod_{K_x\prod_{X,x}\partial\Delta^n}\partial\Delta^n|\prod_{|\Delta^n|}\{p'\}
\]
To establish the condition (5), let us consider any \(x''>x\ (x''\in X^\nd)\).
The dimension \(m\) of \(x''\) is greater than or equal to \(n\).
An inner point \(p\in |\Delta^m|\) of \(|x''|\) does not intersect \(|x|\), meaning that \(\{p\}\prod_{|X|,|x|}|\Delta^n|=\emptyset\), because \(x''\) is non-degenerate.
Since \(|K_{x}|\) and \(|K_{x'}|\) are the same except on the fiber of points in \(|x|\), the map \(|\pi_{x'}/(x'',m)|\to |\pi_{x}/(x'',m)|\) is bijective on the fiber above inner point of \(x''\).
The rest follows from the same reasoning as in condition (2).

Let \(x_0\) be the minimum element of \(X^\nd\coprod\{\infty\}\).
We take \(\widetilde{X}=K_{x_0},\pi=\pi_{x_0}\).
We see that these satisfy the conditions of this proposition.
Take \(n\ge 0\) and non-degenerate \(a\in X_n\).
We have to see that \(\pi/(a,n)\) is contractible.
We show this by induction on \(n\) from smaller to larger.
First, if \(n=0\), we have the following isomorphisms since \(\partial\Delta^n=\emptyset\).
\[\pi/(x,n)\cong \pi/(x,n)\coprod_{\widetilde{X}\prod_{X,x}\partial\Delta^n}\partial\Delta^n\cong \pi_{x}/(x,n)\coprod_{K_{x}\prod_{X,x}\partial\Delta^n}\partial\Delta^n\]
The right-hand side is contractible, and so is \(\pi/(x,n)\).
In the case \(n\ge 1\), \(\widetilde{X}\prod_{X,x}\partial \Delta^n\to \partial\Delta^n\) is weak homotopy equivalence by the induction hypothesis.
\[
\pi/(x,n)\os{\sim}{\to} \pi/(x,n)\coprod_{\widetilde{X}\prod_{X,x}\partial\Delta^n}\partial\Delta^n\cong \pi_{x}/(x,n)\coprod_{K_{x}\prod_{X,x}\partial\Delta^n}\partial\Delta^n
\]
The morphism on the left-hand side is a weak homotopy equivalence.
The right-hand side is contractible, and so is \(\pi/(x,n)\).
\end{proof}

\section{Main theorem}
We prove our main theorem (Theorem \ref{the_main}) in this section.


A pair of simplicial sets \((X,Y)\) denotes a pair consisting of a simplicial \(X\) and its subsimplicial set \(Y\).

\begin{lem}\label{lem:mainlem}
Let \(X\) be a simplicial set, and \(Y\subset X\) be a subsimplicial set.
For any finite subsimplicial set \(X'\subset X\), there exists a pair of simplicial sets \((\widetilde{X},\widetilde{Y})\), a morphism of pairs of simplicial sets \(\pi:(\widetilde{X},\widetilde{Y})\to (X',X'\prod_{X} Y)\) and a homotopy \(H:(|\widetilde{X}\times\Delta^1|,|\widetilde{Y}\times\Delta^1|)\to (|X|,|Y|)\) from \(|\pi|\) to \((\widetilde{X},\widetilde{Y})\to (|Y|,|Y|)\subset (|X|,|Y|)\) such that \(\widetilde{X}\to X',\ \widetilde{Y}\to X'\cap Y\) are weak homotopy equivalences.
Then the inclusion morphism \(Y\subset Z\) is a weak homotopy equivalence.
\end{lem}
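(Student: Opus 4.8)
The plan is to prove that the realization map $|Y| \to |X|$ is a weak homotopy equivalence (this is what the inclusion $Y \subset X$ being a weak equivalence means) by showing that every relative homotopy group $\pi_n(|X|, |Y|)$ vanishes, for every choice of basepoint, and that $\pi_0(|Y|) \to \pi_0(|X|)$ is surjective. By the long exact sequence of the pair $(|X|, |Y|)$ these two facts together force $|Y| \hookrightarrow |X|$ to induce isomorphisms on all homotopy groups.

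The first step is a compactness reduction that brings the hypothesis into play. A class in $\pi_n(|X|, |Y|)$ is represented by a based map of pairs $(D^n, S^{n-1}) \to (|X|, |Y|)$, with the basepoint sent to a chosen $0$-simplex. Since $D^n$ is compact and $|X| = \colim |X''|$ over the finite subsimplicial sets $X'' \subset X$, this map factors through $(|X'|, |X' \prod_X Y|)$ for some finite $X' \subset X$ containing the basepoint, where $X' \prod_X Y = X' \cap Y$. This is exactly the pair to which the hypothesis applies, so we obtain $(\widetilde{X}, \widetilde{Y})$, the morphism $\pi$, and the homotopy $H$ of the statement.

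Next I would promote $\pi$ to a weak equivalence of pairs and use it to lift. Because $\widetilde{X} \to X'$ and $\widetilde{Y} \to X' \cap Y$ are weak homotopy equivalences, comparing the long exact sequences of the pairs $(\widetilde{X}, \widetilde{Y})$ and $(X', X' \cap Y)$ and invoking the five lemma shows that $\pi$ induces isomorphisms $\pi_m(|\widetilde{X}|, |\widetilde{Y}|) \xrightarrow{\cong} \pi_m(|X'|, |X' \cap Y|)$ for all $m$. Hence the map of pairs $(D^n, S^{n-1}) \to (|X'|, |X' \cap Y|)$ lifts, up to a homotopy of pairs, through $|\pi|$ to a map $(D^n, S^{n-1}) \to (|\widetilde{X}|, |\widetilde{Y}|)$. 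Composing this lift with the homotopy of pairs $H$ then yields a homotopy, staying inside $(|X|, |Y|)$ and keeping the boundary sphere in $|Y|$ throughout, from our representative to a map whose entire image lies in $|Y|$; here the endpoint condition on $H$, namely that all of $\widetilde{X}$ is carried into $|Y|$, is essential. A map of pairs with image contained in $|Y|$ represents the trivial relative class, so the original class in $\pi_n(|X|, |Y|)$ vanishes. Surjectivity of $\pi_0(|Y|) \to \pi_0(|X|)$ follows from the same argument applied to a point of $|X|$ in place of a disk.

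The main obstacle I expect is the bookkeeping of basepoints and the concatenation of homotopies: one must arrange the lift through $|\pi|$, the homotopy witnessing that lift, and the pushing homotopy $H$ so that they splice into a single homotopy of pairs rel basepoint, and one must check that the boundary $S^{n-1}$ never leaves $|Y|$ at any time. This last point rests on $H$ being a genuine homotopy of pairs together with the lift sending the boundary into $\widetilde{Y}$; by contrast, the compactness reduction and the five-lemma comparison of the two long exact sequences are routine.
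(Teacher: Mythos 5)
Your proposal is correct and takes essentially the same route as the paper: compactness reduces the problem to a finite subcomplex \(X'\), the hypothesis supplies \((\widetilde{X},\widetilde{Y})\), the representative is lifted through \(|\pi|\) up to homotopy of pairs (you justify this via the five lemma on the long exact sequences, which the paper leaves implicit), and the homotopy \(H\) then compresses the representative into \(|Y|\). The only difference is packaging: you phrase the weak-equivalence criterion as vanishing of the relative homotopy groups \(\pi_n(|X|,|Y|)\) together with \(\pi_0\)-surjectivity, whereas the paper tests directly against the model pairs \((S^n,\{*\})\) and \(((S^n\times I)/(\{*\}\times I),\, S^n\vee S^n)\), which encode exactly the same surjectivity and injectivity statements.
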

\begin{proof}
Let \((K,L)\) be a pair of topological spaces of the form \(((S^n\times I)/(\{*\}\times I), S^n\vee S^n)\) or \((S^n,\{*\})\) for some \(n\ge0\), where \(I=[0,1]\subset\real\) is the unit interval.
Any morphism of pairs of topological spaces \(\phi:(K,L)\to (|X|,|Y|)\) factors through the geometric realization \((|X'|,|X'|\cap |Y|)\) of a finite subsimplicial set \(X'\subset X\), since \(K\) is compact.
Take \((\widetilde{X},\widetilde{Y})\), \(\pi\), and \(H\) as hypothesized.
The morphism \(\phi\) lifts to \((|\widetilde{X}|,|\widetilde{Y}|)\) up to homotopy.
The homotopy \(H\) shows that \(|\phi|\) factors through \((|Y|,|Y|)\) up to homotopy.
This shows that the inclusion \(Y\subset X\) is a weak homotopy equivalence.
\end{proof}

Let \(K\) be a simplicial set with patial ordering \(\le\) on \(K_0\).
We say a pair \((K,\le)\) is an ordered simplicial set if \(a<b\) for all non-degenerate edges from the vertex \(a\) to the vertex \(b\).
If the ordered set \((K_0,\le)\) is a totally ordered set, we say \((K,\le)\) is a totally ordered simplicial set.
A simplicial set \(A\) admits a structure of an ordered simplicial set if and only if it is acyclic.
Moreover, there is a minimum ordering \(\le_{\mathrm{min}}\) on an acyclic simplicial set \(A\) defined as follows:
\[a <_{\mathrm{min}} b \text{ if and only if there exists a trail from } a \text{ to } b.\]
For any ordered simplicial set \((K,\le)\) and any refinement ordering \(\le'\) on \(K_0\), \((K,\le')\) is also an ordered simplicial set.
We often omit the ordering and just write \(K\) instead of \((K,\le)\).

Let \(K\) be an ordered simplicial set, and let \(v\in K_0\) be its vertex.
We denote \(K_{<v}, K_{\le v}, K_{>v},K_{\ge v}\subset K\) as the subsimplicial sets spanned by the set of vertices \(\{u\in K_0\mid u<v\},\ \{u\in K_0\mid u\le v\},\ \{u\in K_0\mid u>v\}\), and \(\{u\in K_0\mid u\ge v\}\), respectively.

\begin{thm}\label{the_main}
  Let \((X,Y)\) be a pair of simplicial sets.
  If \((X,Y)\) satisfies the following property, then the inclusion morphism \(Y\subset X\) is a weak homotopy equivalence.
  
  Let \(\phi:(K,L)\to (X,Y)\) be any morphism of pairs, where \(K,L\) are finite totally ordered simplicial sets, and \(v\in K_0\) be a vertex.
  We assume that \((K_{<v},L_{<v})\to (X,Y)\) lifts to \((Y,Y)\), where \(L_{<v}\) is the inverse image of \(K_{<v}\). 
  \[
  \xymatrix{
    L_{<v}\ar[r]\ar[d]&K_{<v}\ar[d]\ar@{..>}[ld]_{\exists}\\
    Y\ar[r]&X
  }
  \]
  Then there exists a homotopy \(H:(K\times\Delta^1,L\times\Delta^1)\to (X,Y)\) from \(\phi\) to \(\psi:(K,L)\to (X,Y)\) such that the restriction of \(\psi\) to \((K_{\le v},L_{\le v})\) lifts to \((Y,Y)\).
\end{thm}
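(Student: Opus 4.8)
The plan is to verify, for every finite subsimplicial set \(X'\subset X\), the hypotheses of Lemma \ref{lem:mainlem}. That lemma reduces the weak homotopy equivalence \(Y\subset X\) to the following local task: given a finite \(X'\), produce a pair \((\widetilde{X},\widetilde{Y})\), a morphism \(\pi:(\widetilde{X},\widetilde{Y})\to (X',X'\cap Y)\) whose two components are weak homotopy equivalences, together with a homotopy from \(|\pi|\) into \((|Y|,|Y|)\). The key observation is that acyclicity, supplied by Theorem \ref{thm:finiteacyclification}, is exactly the feature that lets us feed such a datum into the vertex-by-vertex condition of the theorem.

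First I would apply Theorem \ref{thm:finiteacyclification} to \(X'\) to obtain a finite acyclic simplicial set \(\widetilde{X}\) and a morphism \(\pi:\widetilde{X}\to X'\) all of whose fibers are contractible; in particular \(\pi\) and every base change of \(\pi\) is a weak homotopy equivalence. I set \(\widetilde{Y}=\widetilde{X}\prod_{X'}(X'\cap Y)\), the base change of \(\pi\) along \(X'\cap Y\hookrightarrow X'\), so that \(\widetilde{Y}\to X'\cap Y\) is again a weak homotopy equivalence and \(\widetilde{Y}\subset\widetilde{X}\). Because \(\widetilde{X}\) is acyclic it admits an ordering, and any refinement to a total order on the finite vertex set \(\widetilde{X}_0\) turns \((\widetilde{X},\widetilde{Y})\) into a pair of finite totally ordered simplicial sets \((K,L)\) (using Proposition \ref{prop:acyclic}(2) to see that \(L=\widetilde{Y}\) is acyclic and inherits the order, with \(L_{<v}=L\cap K_{<v}\) the inverse image of \(K_{<v}\)). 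Composing \(\pi\) with the inclusion gives the starting map of pairs \(\phi_0:(K,L)\to (X',X'\cap Y)\subset (X,Y)\).

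Next I would push \(\phi_0\) into \(Y\) by induction along the total order. Enumerate \(K_0=\{v_1<\cdots<v_N\}\) so that \(K_{<v_i}=K_{\le v_{i-1}}\) as spanned subsimplicial sets. At stage \(i\) I have a map \(\phi_{i-1}\) of pairs whose restriction to \((K_{\le v_{i-1}},L_{\le v_{i-1}})=(K_{<v_i},L_{<v_i})\) already lifts to \((Y,Y)\); for \(i=1\) this restriction is empty, so the condition is vacuous. The hypothesis of the theorem, applied with the vertex \(v=v_i\), then yields a homotopy \(H_i\) from \(\phi_{i-1}\) to a map \(\phi_i\) whose restriction to \((K_{\le v_i},L_{\le v_i})\) lifts to \((Y,Y)\); since \(K_{<v_{i+1}}=K_{\le v_i}\), this is precisely the input required at the next stage. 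After stage \(N\) the restriction to \((K_{\le v_N},L_{\le v_N})=(K,L)\) lifts to \((Y,Y)\), i.e.\ \(\phi_N\) factors through \((|Y|,|Y|)\) after realization. Concatenating and reparametrizing the realized homotopies \(|H_1|,\dots,|H_N|\) produces a single homotopy \(H\) from \(|\pi|\) to \(|\phi_N|\) landing in \((|Y|,|Y|)\), which is exactly the datum Lemma \ref{lem:mainlem} consumes to conclude that \(Y\subset X\) is a weak homotopy equivalence.

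The step I expect to require the most care is the bookkeeping that makes the induction close up: I must choose the total order so that passing the vertex \(v_i\) upgrades the lifted part from \(K_{<v_i}\) to \(K_{\le v_i}\) in lockstep with the indexing, and I must check that ``lifting to \(Y\)'' on \((K_{\le v_i},L_{\le v_i})\) automatically restricts to the smaller pair needed next. Conceptually the real content is that without acyclicity one cannot choose a compatible total order at all, since cycles in \(G_{X'}\) would force a vertex to be processed before one of its predecessors; so Theorem \ref{thm:finiteacyclification} is doing the essential work of removing this obstruction. A secondary technical point is that the homotopies produced by the hypothesis are simplicial, so the concatenation is carried out after geometric realization, where \(|K\times\Delta^1|\cong |K|\times[0,1]\) and homotopies compose in the usual topological sense.
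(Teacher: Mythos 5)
Your proposal is correct and follows essentially the same route as the paper's own proof: reduce via Lemma \ref{lem:mainlem}, take \(\widetilde{X}\to X'\) from Theorem \ref{thm:finiteacyclification} with \(\widetilde{Y}=Y\times_X\widetilde{X}\), and construct the required homotopy inductively using the theorem's hypothesis. Your vertex-by-vertex induction along a total order refining the acyclic order simply spells out the step the paper leaves implicit in the phrase ``the hypothesis allows us to make that inductively.''
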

\begin{proof}
  The proof follows along the lines of Lemma \ref{lem:mainlem}.
  Let \(X'\subset X\) be a finite subsimplicial set.
  Take \(\widetilde{X}\to X'\) as Theorem \ref{thm:finiteacyclification}, and \(\widetilde{Y}\) as \(Y\times_{X} \widetilde{X}\).
  We have to make a homotopy \((|\widetilde{X}\times\Delta^1|,|\widetilde{Y}\times\Delta^1|)\to (|X|,|Y|)\) satisfying certain conditions, but the hypothesis allows us to make that inductively.
\end{proof}

There is the dual result.

\begin{thm}
  Let \((X,Y)\) be a pair of simplicial sets.
  If \((X,Y)\) satisfies the following property, then the inclusion morphism \(Y\subset X\) is a weak homotopy equivalence.
  
  Let \(\phi:(K,L)\to (X,Y)\) be any morphism of pairs, where \(K,L\) are finite totally ordered simplicial sets, and \(v\in K_0\) be a vertex.
  We assume that \((K_{>v},L_{>v})\to (X,Y)\) lifts to \((Y,Y)\), where \(L_{>v}\) is the inverse image of \(K_{>v}\). 
  \[
  \xymatrix{
    L_{>v}\ar[r]\ar[d]&K_{>v}\ar[d]\ar@{..>}[ld]_{\exists}\\
    Y\ar[r]&X
  }
  \]
  Then there exists a homotopy \(H:(K\times\Delta^1,L\times\Delta^1)\to (X,Y)\) from \(\psi:(K,L)\to (X,Y)\) to \(\phi\) such that the restriction of \(\psi\) to \((K_{\ge v},L_{\ge v})\) lifts to \((Y,Y)\).
\end{thm}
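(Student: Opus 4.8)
The plan is to deduce this dual statement from Theorem \ref{the_main} by passing to opposite simplicial sets, rather than re-running the argument of Lemma \ref{lem:mainlem} from scratch. The key observation is that geometric realization is insensitive to taking opposites: there is a natural homeomorphism \(|X^\op|\cong|X|\) for any simplicial set \(X\), compatible with inclusions of subsimplicial sets, so \(Y\subset X\) is a weak homotopy equivalence if and only if \(Y^\op\subset X^\op\) is one. Thus it suffices to verify that the pair \((X^\op,Y^\op)\) satisfies the hypothesis of Theorem \ref{the_main}, and then invoke that theorem.

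First I would record the behavior of the relevant combinatorial data under \((-)^\op\). If \((K,\le)\) is a finite totally ordered simplicial set, then \(K^\op\) carries the reversed total order \(\le^\op\) (defined by declaring \(a\le^\op b\) exactly when \(b\le a\)), and \((K^\op,\le^\op)\) is again a totally ordered simplicial set: a non-degenerate edge \(a\to b\) in \(K^\op\) is a non-degenerate edge \(b\to a\) in \(K\), which forces \(b<a\), i.e.\ \(a<^\op b\). Moreover the spanned subsimplicial sets interchange, namely \((K^\op)_{<v}=(K_{>v})^\op\) and \((K^\op)_{\le v}=(K_{\ge v})^\op\), because a simplex lies in a spanned subsimplicial set precisely when all of its vertices do, a condition unaffected by reversal of the simplex. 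Finally, a morphism factors through \(Y\) if and only if its opposite factors through \(Y^\op\), since lifting is a purely simplicial condition preserved by \((-)^\op\).

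With these dictionaries in place the verification is a direct translation. Given a test morphism \(\bar\phi:(\bar K,\bar L)\to(X^\op,Y^\op)\) and a vertex \(v\in\bar K_0\) with \((\bar K_{<v},\bar L_{<v})\to(X^\op,Y^\op)\) lifting to \(Y^\op\), I set \((K,L)=(\bar K^\op,\bar L^\op)\) with the reversed order and let \(\phi:(K,L)\to(X,Y)\) be the opposite of \(\bar\phi\). Under the dictionary, the lifting hypothesis becomes exactly the assumption that \((K_{>v},L_{>v})\to(X,Y)\) lifts to \(Y\), so the hypothesis of the present theorem (applied to \(\phi\) and \(v\)) produces a homotopy \(H\) from some \(\psi\) to \(\phi\) with \(\psi|_{(K_{\ge v},L_{\ge v})}\) lifting to \(Y\). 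Taking the opposite of \(H\) and composing with the order-reversing isomorphism \((\Delta^1)^\op\cong\Delta^1\) yields a homotopy from \(\bar\phi\) to \(\psi^\op\); the direction of the homotopy is reversed precisely by this flip of \(\Delta^1\), which is exactly why the present statement runs the homotopy from \(\psi\) to \(\phi\). Since \(\psi|_{K_{\ge v}}\) lifts to \(Y\), its opposite \(\psi^\op|_{\bar K_{\le v}}\) lifts to \(Y^\op\), and this is precisely the conclusion needed to confirm the hypothesis of Theorem \ref{the_main} for \((X^\op,Y^\op)\).

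The main obstacle is not conceptual but a matter of careful bookkeeping: one must track the \emph{simultaneous} reversal of the vertex ordering, of the direction of the \(\Delta^1\)-parameter in the homotopy, and of the roles of the source and target morphisms \(\phi\) and \(\psi\), and confirm that order-reversal genuinely carries one totally ordered simplicial set structure to another and exchanges \(K_{>v}\leftrightarrow K_{<v}\) and \(K_{\ge v}\leftrightarrow K_{\le v}\). Once these correspondences are checked, the theorem follows formally by applying Theorem \ref{the_main} to \((X^\op,Y^\op)\) and transporting the resulting weak homotopy equivalence back along \(|X^\op|\cong|X|\).
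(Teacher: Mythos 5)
Your proof is correct. Note that the paper itself offers no argument for this statement: it is introduced only with the words ``There is the dual result,'' the implicit suggestion being that one re-runs the proof of Theorem \ref{the_main} (hence of Lemma \ref{lem:mainlem} and the induction based on Theorem \ref{thm:finiteacyclification}) with all orderings reversed. Your route is genuinely different and more economical: you reduce the dual statement formally to the already-established Theorem \ref{the_main} by applying the involution \((-)^{\op}\). The bookkeeping you carry out is exactly what is needed, and it checks out: \((-)^{\op}\) preserves finiteness, carries a totally ordered simplicial set \((K,\le)\) to the totally ordered simplicial set \((K^{\op},\le^{\op})\) (non-degenerate edges reverse direction), exchanges the spanned subsimplicial sets \(K_{>v}\leftrightarrow (K^{\op})_{<v}\) and \(K_{\ge v}\leftrightarrow (K^{\op})_{\le v}\) (membership in a spanned subsimplicial set depends only on the set of vertices of a simplex, which \((-)^{\op}\) does not change), commutes with products so that \((K\times\Delta^1)^{\op}\cong K^{\op}\times(\Delta^1)^{\op}\), and preserves lifting conditions; the flip \(\Delta^1\cong(\Delta^1)^{\op}\) interchanges the two ends of a homotopy, which is precisely why the dual statement runs the homotopy from \(\psi\) to \(\phi\) while Theorem \ref{the_main} runs it from \(\phi\) to \(\psi\). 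The only outside input is the natural homeomorphism \(|X^{\op}|\cong|X|\), induced by the coordinate reversals \((t_0,\ldots,t_n)\mapsto(t_n,\ldots,t_0)\) on topological simplices; its naturality is what makes the square comparing \(|Y^{\op}|\subset|X^{\op}|\) with \(|Y|\subset|X|\) commute, and you should cite or prove this standard fact explicitly. In comparison, re-running the inductive construction keeps the proof self-contained but duplicates an argument the paper only sketches, whereas your reduction is short, rigorous granted Theorem \ref{the_main}, and makes the word ``dual'' precise.
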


\begin{rem}
  These theorems are generalizations of the well-known fact: let \(X\) be a \(\infty\)-category and \(Y\) be a full subcategory of \(X\).
  If the natural inclusion functor \(i:Y\to X\) admits an adjoint, then \(i\) is a weak homotopy equivalence.
  We show this fact by using our theorems.
  
  We assume \(i\) admits a left adjoint \(L\).
  In the condition of the hypothesis of the theorem, there is a morphism \(\psi:(K,L)\to (X,Y)\) and a homotopy from \(\phi\) to \(\psi\) satisfying the hypothesis of the theorem.
  Specifically, \(\psi\) is given by the following.
  \[
  \psi(u)=\begin{cases}
  \phi(u)&u<v\\
  i\circ L(\phi(u))&u\ge v
  \end{cases}\ \ \ \text{for \(u\in K_0\)}
  \]
  The unit map makes \(\psi\) a morphism of simplicial sets and gives a homotopy from \(\phi\) to \(\psi\).
  
  If \(i\) admits a right adjoint \(R\), we have a similar result.
  The map \(\psi\) is given by following.
  \[
  \psi(u)=\begin{cases}
  i\circ R(\phi(u))&u\le v\\
  \phi(u)& u>v
  \end{cases}\ \ \ \text{for \(u\in K_0\)}
  \]
  The counit map makes \(\psi\) a morphism of simplicial sets and  gives a homotopy from \(\psi\) to \(\phi\).
\end{rem}

\section{Applications}
In this section, we introduce two applications.
The first application is Theorem \ref{thm:app1}, which states a cofinal full inclusion functor \(\D\to \C\) of \(\infty\)-category induces a homotopy equivalence between their geometric realizations.
As a second application, we provide another proof of Barwick's cofinality theorem of algebraic \(K\)-theory.
He used the additivity theorem to decompose the path space, but we do not need to do that.
Strictly speaking, our statement is for labeled Waldhausen categories, not for Waldhausen categories.
Hence, our statement is a bit stronger than Barwick's.

\subsection{Cofinal inclusions of \(\infty\)-categories}

Let \(\C\) be a \(\infty\)-category and \(\D\) be its subcategory.
The subcategory \(\D\) is called cofinal in \(\C\) if for any \(C\in \C\), there exists \(C'\in \C\) such that \(C\coprod C'\) is in the essential image of \(\D\).

\begin{thm}\label{thm:app1}
  Let \(\C\) be a \(\infty\)-category and \(\D\) be its cofinal full subcategory.
  Then the natural inclusion \(\D\subset \C\) is a weak homotopy equivalence.
\end{thm}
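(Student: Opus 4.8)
The plan is to apply Theorem \ref{the_main} to the pair \((\C,\D)\), regarding both quasi-categories as simplicial sets; since \(\D\) is full, any simplicial map into \(\C\) all of whose vertices lie in \(\D\) automatically factors through \(\D\), so throughout we need only track objects. It then suffices to verify the single-vertex hypothesis: given a morphism of pairs \(\phi\colon(K,L)\to(\C,\D)\) with \(K,L\) finite totally ordered simplicial sets and a vertex \(v\in K_0\) such that \(\phi|_{K_{<v}}\) lands in \(\D\), we must produce a homotopy from \(\phi\) to some \(\psi\), keeping \(L\) inside \(\D\) at all times, with \(\psi|_{K_{\le v}}\) landing in \(\D\). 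This is the cofinal analogue of the adjoint situation treated in the Remark above, where the unit of an adjunction supplied the homotopy carrying objects into \(\D\); here its role is played by the coproduct inclusion \(\phi(u)\to\phi(u)\coprod C\) for a suitable complementary object \(C\).

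The subtlety, and the reason we cannot simply imitate the adjoint proof with one fixed \(C\), is that cofinality is not uniform: one can choose \(C\) with \(\phi(v)\coprod C\in\D\), but there need be no single \(C\) for which \(\phi(u)\coprod C\in\D\) simultaneously for all the finitely many \(u\ge v\) (already in \(\pi_0\), with \(M=\mathbb{N}^2\) and \(\D\) the diagonal, the objects \((1,0)\) and \((0,1)\) admit no common complement). The device that repairs this is to telescope along the total order. Enumerate the vertices \(\ge v\) as \(v=u_0<u_1<\dots<u_r\). Choose \(A_0\) with \(\phi(u_0)\coprod A_0\in\D\), and inductively, having built \(A_{i-1}\), apply cofinality to the object \(\phi(u_i)\coprod A_{i-1}\) to obtain \(B_i\) with \((\phi(u_i)\coprod A_{i-1})\coprod B_i\in\D\), and set \(A_i=A_{i-1}\coprod B_i\). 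By construction \(\psi(u_i):=\phi(u_i)\coprod A_i\) lies in \(\D\) for \emph{every} \(i\), and the complements grow monotonically, \(A_{i-1}\hookrightarrow A_i\), so the coproduct inclusions assemble into a natural transformation \(\phi\Rightarrow\psi\) on \(K_{\ge v}\). Below \(v\) we set \(\psi=\phi\) and take the constant homotopy; the two pieces glue along the inclusions \(\phi(w)\to\phi(w)\coprod A(w)\) at the edges crossing \(v\).

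Granting this, the pair condition is automatic: for \(u\ge v\) the endpoint \(\psi(u)\) lies in \(\D\), and for \(u<v\) we have \(\psi(u)=\phi(u)\in\D\) whenever \(u\in L\), so \(\psi\) carries \(L\) into \(\D\); every object appearing along the homotopy over \(L\) lies in \(\D\), and fullness of \(\D\) then places the relevant morphisms in \(\D\) as well. The main obstacle is to upgrade the objectwise, edgewise construction of the complements into an honest simplicial map \(A\colon K_{\ge v}\to\C\)—equivalently, to produce the homotopy \(H\colon K\times\Delta^1\to\C\) as a genuine map of simplicial sets rather than a mere assignment on vertices and edges. Because the complements only increase, \(A\) should send a simplex \(w_0\le\dots\le w_n\) of \(K_{\ge v}\) to the chain of coproduct inclusions \(A(w_0)\hookrightarrow\dots\hookrightarrow A(w_n)\); realizing this coherently uses the coherence of coproducts in the \(\infty\)-category \(\C\) together with the finiteness of \(K\), which lets one extend skeleton by skeleton. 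This coherence step, not the choice of complements, is where the real work lies.
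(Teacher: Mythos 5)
Your proposal is correct and follows the paper's overall strategy --- verify the hypothesis of Theorem \ref{the_main} by forming coproducts with complementary objects at the vertices \(\ge v\), with the coproduct inclusions supplying the homotopy --- but your scheme for choosing the complements differs genuinely from the paper's, and the difference is instructive. The paper fixes just two objects: \(C'\) with \(\phi(v)\coprod C'\in\D\) and \(C''\) with \(C'\coprod C''\in\D\), and sets \(\psi(u)=\phi(u)\coprod C'\coprod C''\) for \(u>v\); it never needs a common complement for all \(u\ge v\), because Theorem \ref{the_main} only asks that \(\psi|_{K_{\le v}}\) land in \(\D\). So your \(M=\mathbb{N}^2\) observation, while true, rules out a strategy the paper does not attempt. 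What the paper's uniform choice must still guarantee is the pair condition: for \(u\in L\) with \(u>v\) one needs \(\phi(u)\coprod(C'\coprod C'')\in\D\), i.e.\ that a coproduct of two objects of \(\D\) again lies in \(\D\) up to equivalence --- and that does not follow from cofinality plus fullness (take \(\C\) the finitely generated free abelian groups and \(\D\) those of odd rank together with \(0\)). Your telescoping complements \(A_0\hookrightarrow A_1\hookrightarrow\cdots\) sidestep exactly this issue: every \(\psi(u_i)=\phi(u_i)\coprod A_i\) lies in \(\D\) by construction, so the condition on \(L\times\Delta^1\) holds with no closure hypothesis, and on this point your argument is tighter than the paper's. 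Two caveats. First, cofinality only places \(\phi(u_i)\coprod A_i\) in the \emph{essential image} of \(\D\), so you need the paper's opening reduction: replace \(\D\) by its closure under equivalences, which changes nothing homotopically. Second, the homotopy coherence of your chains of coproduct inclusions, which you rightly identify as the remaining work, is treated no more rigorously in the paper (``the natural transforms give a homotopy''); the paper's proof of Theorem \ref{thm:wald_main} shows the intended repair --- fix once and for all a functor \(\C\times N(\Ar[2])\to\C\) encoding the coproduct inclusions --- and the same device, extended along your finite chain of complements, would complete your skeleton-by-skeleton construction.
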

\begin{proof}
  We may assume that \(\D\) is closed under equivalences since equivalences of categories do not change the homotopy types.
  We have to show that the inclusion \(\D\subset \C\) satisfies the hypothesis of Theorem \ref{the_main}.
  For any \(K,L, v\) and \(\phi\), we define \(\psi:K\to \D\) as follows:
  \[
  \psi(u)=
  \begin{cases}
    \phi(u)& u< v\\
    \phi(u)\coprod C'&u= v\\
    \phi(u)\coprod C'\coprod C'' &u>v
  \end{cases}\ \ \ \text{for all \(u\in K_0\)},
  \]
  where \(C',C''\) are objects in \(\C\) such that \(\phi(v)\coprod C'\in \D\) and \(C'\coprod C''\in\D\) (we may take \(C''=\phi(u)\)).
  The natural transforms give a homotopy from \(\phi\) to \(\psi\).
\end{proof}

\begin{thm}[dual]
  Let \(\C\) be a \(\infty\)-category and \(\D\) be its full subcategory.
  We assume that for any \(C\in\C\), there exists \(C'\in\C\) such that \(C\times C'\) is in the essential image of \(\D\).
  Then the natural inclusion \(\D\to \C\) is a weak homotopy equivalence.
\end{thm}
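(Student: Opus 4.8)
The plan is to mirror the proof of Theorem~\ref{thm:app1} under the product--coproduct duality: products replace coproducts and the canonical product projections replace the coproduct inclusions, so that the dual form of Theorem~\ref{the_main} takes the place of Theorem~\ref{the_main} itself. As in the cofinal case I would first replace \(\D\) by its closure under equivalences, which alters neither the hypothesis nor the homotopy type of \(N\D\), and then verify that \(\D\subset\C\) satisfies the hypothesis of the dual of Theorem~\ref{the_main} with \((X,Y)=(N\C,N\D)\).

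Concretely, fix finite totally ordered simplicial sets \(K,L\), a vertex \(v\in K_0\), and a morphism of pairs \(\phi:(K,L)\to(N\C,N\D)\) whose restriction to \(K_{>v}\) already lands in \(\D\). Using the hypothesis I would choose \(C'\in\C\) with \(\phi(v)\times C'\) in the essential image of \(\D\), and then \(C''\in\C\) with \(C'\times C''\) in \(\D\) as well, and define \(\psi:K\to\C\) on vertices by
\[
\psi(u)=
\begin{cases}
\phi(u)\times C'\times C'' & u<v,\\
\phi(u)\times C' & u=v,\\
\phi(u) & u>v.
\end{cases}
\]
Since the trailing factors only shrink as \(u\) increases, every edge \(u\to w\) can be sent to \(\phi(u\to w)\) times the projection onto the factors surviving at \(w\); this makes \(\psi\) a morphism of simplicial sets, and the projections \(\psi(u)\to\phi(u)\) assemble into a natural transformation \(\psi\Rightarrow\phi\), that is, a homotopy from \(\psi\) to \(\phi\). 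By construction \(\psi\) restricted to \(K_{\ge v}\) lands in \(\D\): its value at \(v\) is \(\phi(v)\times C'\in\D\), and above \(v\) it agrees with \(\phi\), which already lies in \(\D\) there.

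The delicate point, which I expect to be the main obstacle, is that this homotopy must be a morphism of \emph{pairs}, carrying \(L\times\Delta^1\) into \(N\D\); since \(\D\) is full this amounts to requiring \(\psi(u)\in\D\) for every \(u\in L_0\). The cases \(u\ge v\) are handled above, so the real content is the case \(u<v\), where one must ensure \(\phi(u)\times C'\times C''\in\D\) even though only \(\phi(u)\) is a priori known to be in \(\D\). Because \(K\), hence \(L\), is finite, this is a condition on finitely many vertices, so the task reduces to choosing \(C'\) and \(C''\) once and for all so as to absorb this finite family into \(\D\); as in Theorem~\ref{thm:app1} one expects to arrange this by taking \(C''\) to be built from the relevant values \(\phi(u)\) and invoking the cofinality hypothesis. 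Granting this membership, the projection homotopy stays inside \(N\D\) over \(L\), the hypothesis of the dual of Theorem~\ref{the_main} is met, and that theorem delivers the desired weak homotopy equivalence \(\D\subset\C\).
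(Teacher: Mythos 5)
Your overall strategy---dualize the proof of Theorem~\ref{thm:app1}, replacing coproducts by products and coproduct inclusions by projections, then invoke the dual form of Theorem~\ref{the_main}---is exactly what the paper intends (the paper states this dual theorem with no proof at all), and your definition of \(\psi\) and of the projection homotopy \(\psi\Rightarrow\phi\) is the right one. The genuine gap is the step you flag and then defer: you need \(\phi(u)\times C'\times C''\in\D\) for every \(u\in L_0\) with \(u<v\), and you propose to arrange this by ``choosing \(C'\) and \(C''\) once and for all,'' with \(C''\) built from the values \(\phi(u)\). No such uniform choice exists in general, because membership in \(\D\) is not preserved by taking the product of two objects of \(\D\); that is precisely the difference between cofinality and closure under products. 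Concretely, let \(\C\) be the nerve of the category of finite nonempty sets and let \(\D\) be the full subcategory of sets whose cardinality \(n\) has \((v_2(n),v_3(n))\in S\), where \(S\subset\nat^2\) consists of all of the row \(j=0\), nothing in the row \(j=1\), and only the even columns in each row \(j\ge 2\). This \(\D\) satisfies the hypothesis of the theorem. Take \(\phi:\Delta^2\to N\C\) with vertices \(u_1<u_2<v\) sent to a \(1\)-point, a \(2\)-point, and a \(3\)-point set, and \(L\) the edge spanned by \(u_1,u_2\). Then \(\phi(u_1),\phi(u_2)\in\D\), but any \(C'\) with \(\phi(v)\times C'\in\D\) forces \(v_3(|C'|)\ge 1\), and then for every \(C''\) the cardinalities \(|C'\times C''|\) and \(2\,|C'\times C''|\) lie in a row \(j\ge 1\) of \(\nat^2\), where \(S\) never contains two horizontally adjacent points; so \(\phi(u_1)\times C'\times C''\) and \(\phi(u_2)\times C'\times C''\) are never both in \(\D\). (The same objection applies to the paper's own proof of Theorem~\ref{thm:app1}, whose single \(C''\) and parenthetical ``we may take \(C''=\phi(u)\)'' suffer the identical defect, so you have faithfully reproduced a gap that is already present there.)

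The gap can be closed, and the repair is exactly where the total ordering of \(K\) earns its keep. The vertices of \(K\) below \(v\) form a finite chain \(u_1<\cdots<u_k<v\). Choose the padding inductively from the top down: first \(C'\) with \(\phi(v)\times C'\in\D\); then \(E_k\) with \(\bigl(\phi(u_k)\times C'\bigr)\times E_k\in\D\); then \(E_{k-1}\) with \(\bigl(\phi(u_{k-1})\times C'\times E_k\bigr)\times E_{k-1}\in\D\); and so on, each step being one application of the hypothesis to the entire accumulated product. Now set \(\psi(u_i)=\phi(u_i)\times C'\times E_k\times\cdots\times E_i\), \(\psi(v)=\phi(v)\times C'\), and \(\psi(u)=\phi(u)\) for \(u>v\). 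The trailing factors still only shrink as \(u\) increases, so your projection maps and the homotopy from \(\psi\) to \(\phi\) go through unchanged (coherence of the finitely many projections can be fixed by a functor \(\C\times N(\Ar[k+1])\to\C\), as is done in the proof of Theorem~\ref{thm:wald_main}); and every vertex of \(L\), below or above \(v\), now lands in \(\D\) by construction, with fullness of \(\D\) guaranteeing that all simplices of \(K\times\Delta^1\) whose vertices lie in \(\D\) lie in \(N\D\). With this modification your argument becomes a complete proof.
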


\subsection{Another proof of Barwick's cofinality theorem of algebraic \(K\)-theory}

In this section, we give another proof of Barwick's cofinality theorem of algebraic \(K\)-theory.

We recall the definition of Waldhausen \(\infty\)-category by Barwick (\cite[Definition 2.7]{Ba1}).
\begin{defn}
Let \(\C\) be a \(\infty\)-category and \(\C_\dagger\subset \C\) be its subcategory.
A morphism contained in \(\C_\dagger\) is called an ingressive morphism.
A pair \((\C,\C_{\dagger})\) is a Waldhausen \(\infty\)-category if it satisfies the following conditions.
\begin{itemize}
\item[(1)] The subcategory \(\C_\dagger\) contains the maximal Kan complex \(i\C\subset \C\).
\item[(2)] The \(\infty\)-category \(\C\) contains a zero object.
\item[(3)] Any morphism \(0\to X\) from a zero object \(0\) is ingressive.
\item[(4)] Pushout along ingressive morphisms exist.
\item[(5)] Pushout of ingressive morphisms are ingressive morphisms.
\end{itemize}
\end{defn}
We write \(\C\) instead of \((\C,\C_\dagger)\).

This definition is an infinity analog of ordinary category with cofibrations, not of ordinary Waldhausen category.

\begin{defn}
  Let \(\C,\D\) be Waldhausen \(\infty\)-categories, and \(F:\D\to \C\) be a functor of \(\infty\)-categories.
  The functor \(F\) is exact if \(F\) preserves zero objects, ingressives, and pushout along igressives.
\end{defn}

The definition of Waldhausen subcategory by Barwick is too strong.
We define it differently.
\begin{defn}
  Let \(\C,\D\) be Waldhausen \(\infty\)-categories, and \(\D\to \C\) be an inclusion of \(\infty\)-categories.
  We say \(\D\) is a Waldhausen subcategory of \(\C\) if the inclusion functor is exact, and a morphism in \(\D\) is an ingressive if and only if it is an ingressive in \(\C\) with cokernel in \(\D\).
\end{defn}

The \(\infty\)-category \(\Gap([n],\C)\) is defined in \cite[Definition 1.2.2.2]{HA}.
Let \(\Ar[n]\) be the category whose objects are \((i,j)\ (0\le i\le j\le n)\) and a set of morphisms from \((i,j)\) to \((i',j')\) is \(\{*\}\) if \(i\le i', j\le j'\), otherwise \(\emptyset\).
It is the arrow category of the ordered set \([n]=\{0<1<\cdots<n\}\).
The \(\infty\)-category \(\Gap([n],\C)\) is a subcategory of \(\Func(N(\Ar[n]),\C)\) spanned by functors \(A:N(\Ar[n])\to \C\) satisfying the following conditions.
\begin{itemize}
\item[(1)] The object \(A_{i,i}\ (0\le i\le n)\) is a zero object.
\item[(2)] The morphism \(A_{i,j}\to A_{i,k}\ (i\le j\le k)\) is an ingressive morphism.
\item[(3)] The diagram 
\[\xymatrix{
A_{i,j}\ar[r]\ar[d]&A_{i,k}\ar[d]\\
A_{j,j}\ar[r]&A_{j,k}
}\]
is a coCartesian diagram for all \(i\le j\le k\).
\end{itemize}
The family \((\Gap([n],\C))_{n}\) forms a simplicial \(\infty\)-category.

Let \(s_n\C\) be the set of objects of \(\Gap([n],\C)\).
The family \(s_*\C\) forms a simplicial set.

Let \(0\) be a fixed zero object in \(\C\).
There is a functor \(\C\to\Gap([1],\C)\) that maps \(X\) to 
\[
\xymatrix{
&0\\
0\ar[r]&X\ar[u]
}.
\]
Therefore, we get a map \(\Obj(\C)\to \pi_1(s_*\C,0):X\mapsto [X]\).

Let \(G\) be a group admitting a surjective group homomorphism \(\pi_1(s_*\C,0)\to G\).
We define a simplicial set \(s_*(\C,G)\) by 
\[s_n(\C,G)=\{(A,(g_0,\ldots,g_n))\in s_n\C\times G^n\mid g_{j}=g_i+[A_{i,j}]\text{ for all }i\le j\}.\]
Then \(s_*(\C,G)\) forms a simplicial set.
Forgetting the structure of \(G\), we get a morphism \(s_*(\C,G)\to s_*(\C)\) which makes \(s_*(\C,G)\) a connected covering space of \(s_*(\C)\).

Similarly, we define \(s_*(T,G)\) for \(T\) a set of objects of \(\C\).
Let \(T\) be a set of objects of \(\C\) and \(H\subset G\) be a subgroup generated by the image of elements of \(T\).
In the above setting, we define a subsimplicial set \(s_*(T,G)\subset s_*(\C,G)\) by
\[
s_n(T,G)=\{(A,(g_0,\ldots,g_n))\in s_n(\C,G)\mid A_{i,j}\in T, g_0\in H\text{ for all }i\le j\}.
\]
We write \(s_*(T)\) instead of \(s_*(T,0)\).
The simplicial set \(s_*(T,G)\) is a connected covering space of \(s_*(T)\).

\begin{thm}\label{thm:wald_main}
Let \(\C\) be a Waldhausen \(\infty\)-category, \(G\) be a group with a surjective group homomorphism \(\pi_1(s_*\C,0)\to G\), and \(T\) be a set of objects of \(\C\).
We assume that
\begin{itemize}
\item[(1)] The set of objects \(T\) is closed under taking finite coproducts. In particular, \(T\) contains a zero object.
\item[(2)] For any \(A\in \Obj(\C)\),\ there exists \(A'\in \Obj(\C)\) such that \(A\coprod A'\in T\).
\item[(3)] For any \(A\in \Obj(\C)\), if \([A]\in H\) then there exists \(A'\in T\) such that \(A\coprod A'\in T\).
\end{itemize}
Then the natural inclusion \(s_*(T,G)\to s_*(\C,G)\) is a weak homotopy equivalence.
\end{thm}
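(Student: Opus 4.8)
The plan is to apply Theorem \ref{the_main} to the pair \((X,Y)=(s_*(\C,G),s_*(T,G))\) and to verify its hypothesis directly. So fix finite totally ordered simplicial sets \((K,L)\), a morphism of pairs \(\phi\colon (K,L)\to (s_*(\C,G),s_*(T,G))\), and a vertex \(v\in K_0\), and assume \((K_{<v},L_{<v})\) already lifts to \((Y,Y)\). It is convenient to read a morphism \(\phi\colon K\to s_*(\C,G)\) as the assignment, to each pair of vertices \(u\le u'\) occurring in a common simplex, of the subquotient \(\phi_{u,u'}\in\Obj(\C)\) (with \(\phi_{u,u}\) a zero object), together with the coherent cofiber sequences \(\phi_{u,u'}\rightarrowtail\phi_{u,u''}\twoheadrightarrow\phi_{u',u''}\) and the sheet labels \(g_u\in G\) of the cover, which satisfy \(g_{u'}=g_u+[\phi_{u,u'}]\). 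Lifting a simplex to \(s_*(T,G)\) amounts to asking that all its subquotients lie in \(T\) and that its bottom label lies in \(H\).

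I would define \(\psi\) by \emph{inserting a fixed object \(C'\) at the level \(v\)}: set \(\psi_{u,u'}=\phi_{u,u'}\coprod C'\) when \(u<v\le u'\) and \(\psi_{u,u'}=\phi_{u,u'}\) otherwise, and shift every sheet label \(g_u\) with \(u\ge v\) by \([C']\) while leaving the labels with \(u<v\) unchanged. A short check on the three cases \(u<u'<v\), \(u<v\le u'\), \(v\le u<u'\) shows that this respects the cofiber sequences and that the shifted labels are consistent (the label difference across a crossing pair becomes \([\phi_{u,u'}]+[C']=[\psi_{u,u'}]\)), so \(\psi\) is a well-defined morphism \(K\to s_*(\C,G)\). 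Within \(K_{\le v}\) the only pairs that change are the crossing pairs \((u,v)\) with \(u<v\), whose subquotients become \(\phi_{u,v}\coprod C'\), while the bottom label of any simplex is either some \(g_u\in H\) (\(u<v\)) or the shifted label \(g_v+[C']\) at the single vertex \(v\); hence \(\psi|_{(K_{\le v},L_{\le v})}\) will lift to \((Y,Y)\) precisely when \(\phi_{u,v}\coprod C'\in T\) for all crossing \(u<v\) and \(g_v+[C']\in H\).

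The crux is therefore the choice of \(C'\). Since \(u<v\) lies in \(K_{<v}\) we have \(g_u\in H\), so every crossing subquotient satisfies \([\phi_{u,v}]=g_v-g_u\equiv g_v\pmod H\). First I would produce an object \(W\) with \([W]\equiv g_v\pmod H\): take \(W=\phi_{u_0,v}\) for some crossing pair if one exists, or else use that \(G\) is a quotient of \(\pi_1(s_*\C,0)\), which is generated by the object classes \([X]\) (these add under coproduct), together with condition (2) rewriting \(-[X]\equiv[X']\pmod H\), to see that every class of \(G\) is represented mod \(H\) by a single object. By condition (2) choose \(D\) with \(W\coprod D\in T\); then \([D]\equiv-g_v\pmod H\), so each \(\phi_{u,v}\coprod D\) has class in \(H\). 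Now condition (3) supplies, for each of the finitely many crossing subquotients \(B_1,\dots,B_m\), an object \(E_j\in T\) with \((B_j\coprod D)\coprod E_j\in T\). Setting \(C'=D\coprod E_1\coprod\cdots\coprod E_m\) gives \([C']\equiv-g_v\pmod H\) (so \(g_v+[C']\in H\)) and, after regrouping, \(B_j\coprod C'=\bigl((B_j\coprod D)\coprod E_j\bigr)\coprod\coprod_{l\ne j}E_l\), which is a coproduct of objects of \(T\) and hence lies in \(T\) by condition (1). This is exactly where condition (3) is indispensable: condition (2) alone produces complements outside \(T\), and the leftover copies of \(B_j\) could not be absorbed. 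One also checks that \(\psi\) restricts correctly on \(L\): if \(g_v\notin H\) then \(v\notin L_0\), so \(\psi|_L=\phi|_L\); and if \(g_v\in H\) then \(C'\in T\), so the insertion keeps \(T\)-subquotients in \(T\).

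Finally I would build the homotopy \(H\colon (K\times\Delta^1,L\times\Delta^1)\to(X,Y)\) from \(\phi\) to \(\psi\) out of the coproduct-inclusion natural transformation \(\phi_{u,u'}\rightarrowtail\phi_{u,u'}\coprod C'\) on crossing pairs (the identity elsewhere), whose cofibers are \(C'\); along the \(\Delta^1\)-direction the sheet labels of vertices \(\ge v\) slide from \(g_u\) to \(g_u+[C']\). This is the \(s_*\)-analog of the phrase ``the natural transforms give a homotopy'' used for Theorem \ref{thm:app1}, and it lands in \(s_*(T,G)\) over \(L\) by the same case analysis as above. I expect the main obstacle to be organizing this last step coherently---verifying that the insertion and the label slide assemble into an honest morphism out of the prism \(K\times\Delta^1\) compatible with all faces, degeneracies, and the covering structure---whereas the genuinely new input is the combinatorial choice of \(C'\) above, which is what lets us dispense with the additivity theorem.
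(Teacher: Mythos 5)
Your overall route is the same as the paper's: verify the hypothesis of Theorem \ref{the_main} for the pair \((s_*(\C,G),s_*(T,G))\) by inserting a complement object via coproducts, choose that object by combining conditions (2), (3) and (1) exactly as you do (your \(C'=D\coprod E_1\coprod\cdots\coprod E_m\) is the paper's \(C'=C\coprod(\coprod_e D_e)\)), and produce the homotopy from coproduct inclusions after fixing a coproduct functor. The gap is in your insertion pattern. You insert the single object \(C'\) on all crossing pairs \(u<v\le u'\) and shift all labels at vertices \(\ge v\) by \([C']\), and you justify that \(\psi\) maps \(L\) into \(Y\) by the claim ``if \(g_v\notin H\) then \(v\notin L_0\), so \(\psi|_L=\phi|_L\).'' That claim is false: \(\psi\) modifies \(L\) wherever \(L\) has vertices on both sides of \(v\), or any vertex \(>v\), whether or not \(v\in L_0\). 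Concretely, suppose \(g_v\notin H\); then \([C']\equiv-g_v\pmod{H}\) forces \([C']\notin H\) and \(C'\notin T\). First, an \(L\)-simplex all of whose vertices are \(>v\) keeps its subquotients but its bottom label becomes \(g_{u_0}+[C']\notin H\), so it leaves \(s_*(T,G)\). Second, an \(L\)-simplex containing a strictly crossing pair \(u<v<w\) with \(u,w\in L_0\) (and \(v\notin L_0\)) acquires the subquotient \(\phi_{u,w}\coprod C'\), whose class \([\phi_{u,w}]+[C']\) is not in \(H\), so it is not in \(T\). In either case \(\psi\) (hence also your prism homotopy) fails to be a morphism of pairs, so the hypothesis of Theorem \ref{the_main} is not verified. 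Moreover, no choice of a single insertion object can work: you need the label shift at \(v\) to be \(\equiv-g_v\) modulo \(H\) and the shift at vertices \(>v\) to lie in \(H\), and these are contradictory whenever \(g_v\notin H\).

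This is precisely what the second object \(C''\) in the paper's proof is for: pairs ending at \(v\) gain \(C'\), strictly crossing pairs \(u<v<w\) gain \(C'\coprod C''\), pairs starting at \(v\) gain \(C''\), and labels strictly after \(v\) shift by \([C']+[C'']\), where \(C''\) is chosen so that \(C'\coprod C''\in T\) (condition (d) in the paper's claim). Then strictly crossing \(L\)-pairs gain an object of \(T\), and labels after \(v\) shift by \([C'\coprod C'']\in H\), which repairs both failures above; the rest of your argument goes through once you adopt this two-object pattern. A minor separate point: your parenthetical ``if \(g_v\in H\) then \(C'\in T\)'' does not follow from your construction as written, since \(D\) was chosen by condition (2) and need not lie in \(T\); when \(g_v\in H\) (equivalently \([W]\in H\)) you must choose \(D\) by condition (3) instead, which is the analogue of the paper taking \(C=0\) and \(C''=0\) when \(v\in L\).
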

\begin{proof}
  We may assume that \(T\) is closed under equivalences in \(\C\) since equivalences induce homotopies in \(s_*(\C,G)\).
The proof goes along the lines of Theorem \ref{the_main}.
Take \(K,L,v\) and \(\phi\) as the hypothesis of Theorem \ref{the_main}.
We claim that there exist \(C',C''\in \C\) such that
\begin{itemize}
\item[(a)] \(g_0+[C']\in H\), where \((A,(g_0))=\phi(v)\).
\item[(b)] \(A_{0,1}\coprod C'\in T\) for any \(u<v\) and any edge \(e\in K_1\) from \(u\) to \(v\), where \((A,(g_0,g_1))=\phi(e)\).
\item[(c)] If \(v\in L\), then \(C',C''\in T\).
\item[(d)] \(C'\coprod C''\in T\).
\end{itemize}
Let \(C\in \C\) be an object satisfying condition (a).
For all edge \(e\in K_1\) from \(u<v\) to \(v\), take \(D_e\in T\) such that \(A_{0,1}\coprod C\coprod D_e\in T\), where \((A,(g_0,g_1))=\phi(e)\).
We set \(C'= C\coprod (\coprod_e D_e)\) and take \(C''\) such satisfies condition (d).
If \(v\in L\), we may will \(C=0, C''=0\).
These \(C',C''\) satisfies conditions (a)--(d).

We define \(\psi:K\to s_*(\C,G)\) by \(K_n\ni a\mapsto (B,(h_0,\ldots,h_n))\in s_n(\C,G)\), where
\beq
B_{k,l}&=&
\begin{cases}
  A_{k,l}  & w<v\\
  A_{k,l} \coprod C'&w=v\\
  A_{k,l} \coprod C'\coprod C''& u<v<w\\
  A_{k,l} \coprod  C''& v=u\\
  A_{k,l} & v<u,
\end{cases}
\text{ where } v_k(a)=u,v_l(a)=w,\\
h_k &=&
\begin{cases}
  g_k & u<v\\
  g_k+[C'] + [C']&u=v\\
  g_k+[C']+[C''] & v<u,
\end{cases}
\text{ where }v_k(a)=u,\\
\text{where } \phi(a)&=&(A,(g_0,\ldots,g_n)),\ \ \ v_k(a)\text{ is a k-th vertex of }a.
\eeq
Define a homotopy from \(\phi\) to \(\psi\) by \(h_m:K_n\to s_{n+1}(\C,T):\ a\mapsto (C,(i_0,\ldots,i_{n+1}))\ (m=0,\ldots,n)\), where
\beq
C_{k,l}&=&
\begin{cases}
A_{k,l} & (l\le m)\\
A_{k,l-1}&(k\le m<l,\  v_{l-1}(a)<v)\\
A_{k,l-1}\coprod C'&(k\le m<l,\  v_{l-1}(a)=v)\\
A_{k,l-1}\coprod C'\coprod C''&(k\le m<l,\  v<v_{l-1}(a))\\
B_{k-1,l-1}  & (m< k),
\end{cases}\\
i_k &=&
\begin{cases}
g_k & (k\le m)\\
h_{k-1}&(m<k),
\end{cases}\\
\text{where } \phi(a)&=&(A,(g_0,\ldots,g_n)), \ \ \ \psi(a)\ \ =\ \ (B,(h_0,\ldots,h_n)).
\eeq
To define a choice of coproducts, we fix a functor \(\C\times N(\Ar[2])\to \C\) such that
\[
(C,(i,j))\mapsto \begin{cases}
  C& (i=j)\\
  C\coprod C'& (i=0,j=1)\\
  C\coprod C'\coprod C'' & (i=0,j=2)\\
  C\coprod C''&(i=1,j=2)
\end{cases}
\]
and the restriction to \(\C\times\{0\}\to \C\) is the identity functor.
We may check that this homotopy satisfies the hypotheses of Lemma \ref{lem:mainlem}.
\end{proof}

Labeled Waldhausen \(\infty\)-category is defined by Barwick in \cite[Definition 9.2]{Ba1}.
It is an infinity analog of classical Waldhausen category, which is category with cofibrations and weak equivalences.
A Waldhausen \(\infty\)-category \(\C\) is naturally a labeled Waldhausen \(\infty\)-category, where weak equivalences are equivalences \(w\C=i\C\).

A functor of labeled Waldhausen \(\infty\)-categories is called labeled exact if it is exact as a functor of Waldhausen \(\infty\)-categories and it preserves weak equivalences.
Let \(\C,\D\) be labeled Waldhausen \(\infty\)-categories and assume \(\D\) is a Waldhausen subcategory of \(\C\).
The labeled Waldhausen \(\infty\)-category \(\D\) is called a labeled Waldhausen subcategory of \(\C\) if a morphism of \(\D\) is weak equivalence just in the case its image in \(\C\) is so.

Let \(\C=(\C,\C_\dagger,w\C)\) be a labeled Waldhausen \(\infty\)-category.
The \(\infty\)-category \(w\Gap([n],\C)\) is a subcategory of \(\Gap([n],\C)\) with same objects.
A morphism \(A\to B\) in \(\Gap([n],\C)\) is a morphism in \(w\Gap([n],\C)\) if \(A_{i,j}\to B_{i,j}\) is a weak equivalence for all \(i\le j\).
Here, \((w\Gap([n],\C))_n\) forms a simplicial \(\infty\)-category, and its \(K\)-theory space is defined as
\[
K(\C)=\Omega|(w\Gap([n],\C))_n|.
\]
The group
\[K_n(\C)=\pi_n(K(\C),0)\]
is the \(K\)-group of \(\C\) in degree \(n\).

\begin{thm}[Barwick's Cofinality Theorem, {\cite[Theorem 10.19.]{Ba1}}]\label{thm:TheCofinality_Wald}
  Let \(\C\) be a labeled Waldhausen \(\infty\)-category and \(\D\) be its labeled Waldhausen subcategory of \(\C\).
The map \(K(\D)_n\to K(\C)_n\) is an isomorphism for \(n\ge 1\) and is injective for \(n=0\) if the following conditions are satisfied.
\begin{itemize}
\item[(1)] For any \(C\in \C\), there exists \(C'\in \C\) such that \(C\coprod C'\) is in the essential image of \(\D\).
\item[(2)] For any \(C\in \C\), if \([C]\in K_0(\C)\) is in the image of \(K_0(\D)\), there exists \(C'\in \D\) such that \(C\coprod C'\) is in the essential image of \(\D\).
\item[(3)] For any labeled morphism \(C_0\to C_1\in w\C\) with \(C_i\in \D (i=0,1)\), there exists a labeled morphism \(C_0'\to C_1'\) in \(w\D\) such that \(C_0\coprod C_0'\to C_1\coprod C_1'\) is in the essential image of \(w\D\).
\item[(4)] For any \(A\in \Gap([2],\C)\), if each \(1\)-face of \(A\) is contained in the image of \(\Gap([1],\D)\), then \(A\) is in the image of \(\Gap([2],\D)\).
\end{itemize}
\end{thm}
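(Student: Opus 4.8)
The plan is to reduce the statement about $K$-groups to a statement about the homotopy groups of the geometric realization $|w\Gap([\bullet],\C)|$, and then to run the strategy of Theorem \ref{thm:wald_main} one level higher: on the full bisimplicial object defining $K$-theory rather than on the simplicial set of objects $s_*\C$. Since $\Gap([0],\C)$ is a point, the space $|w\Gap([\bullet],\C)|$ is connected with $\pi_1=K_0(\C)$, and by definition $K_n(\C)=\pi_{n+1}(|w\Gap([\bullet],\C)|,0)$. Hence the assertion is equivalent to showing that the map $F\colon |w\Gap([\bullet],\D)|\to |w\Gap([\bullet],\C)|$ is an isomorphism on $\pi_n$ for $n\ge 2$ and injective on $\pi_1$. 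I would work throughout with the diagonal of the bisimplicial set $([n],[m])\mapsto (w\Gap([n],\C))_m$, whose realization is $|w\Gap([\bullet],\C)|$, and with its subsimplicial set built from $\D$.

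First I would dispose of the $K_0$ (that is, $\pi_1$) bookkeeping exactly as in Theorem \ref{thm:wald_main}, by passing to covering spaces. Set $G=K_0(\C)$ and let $H\subset G$ be the image of $K_0(\D)$, which is the subgroup generated by the classes $[A]$ with $A$ in the essential image of $\D$. Labelling the simplices of the diagonal by $G$-valued data in the manner of $s_*(\C,G)$, and imposing $g_0\in H$ on the $\D$-side in the manner of $s_*(T,G)$, produces a map of connected covers. A weak homotopy equivalence of these covers is equivalent to ``$F$ is an isomorphism on $\pi_n$ for $n\ge 2$ and injective on $\pi_1$'': the higher homotopy groups are unchanged by passing to covers, while the relevant cover of $|w\Gap([\bullet],\D)|$ has fundamental group $\ker(K_0(\D)\to K_0(\C))$, so a weak equivalence onto the (simply connected) universal cover of $|w\Gap([\bullet],\C)|$ forces this kernel to vanish, giving the injectivity for $n=0$.

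Next I would establish this weak equivalence of covers by verifying the hypothesis of Theorem \ref{the_main} through Lemma \ref{lem:mainlem}, imitating the homotopy constructed in Theorem \ref{thm:wald_main} and Theorem \ref{thm:app1}; the reduction to finite acyclic models is already packaged inside those results via Theorem \ref{thm:finiteacyclification}. Given finite totally ordered $K,L$, a vertex $v$, and a map $\phi$ that already lifts into $\D$ below $v$, I would define the shifted map $\psi$ by the cofinality recipe: keep $\phi(u)$ for $u<v$, replace it by $\phi(u)\coprod C'$ at $u=v$, and by $\phi(u)\coprod C'\coprod C''$ for $u>v$, where $C',C''$ are chosen using conditions (1) and (2) so that the pertinent coproducts land in the essential image of $\D$ and the $G$-labels stay in $H$. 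The natural transformations $\id\to(-)\coprod C'$ supply the homotopy from $\phi$ to $\psi$, as before. The new feature relative to Theorem \ref{thm:wald_main} is that the simplices now carry the full $w\Gap$-structure, so $\psi$ must be checked to land in $w\Gap([\bullet],\D)$: condition (3) guarantees that a string of weak equivalences between objects of $\D$ can be shifted, via a compatible choice of $C_0'\to C_1'$ in $w\D$, into a string in $w\D$; condition (4) guarantees that a gap diagram whose $1$-faces all come from $\D$ actually lies in $\Gap([\bullet],\D)$, so the coproduct-shifted gap diagrams are recognized as $\D$-diagrams.

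The main obstacle is the coherence of these shifts. One must perform $A\mapsto A\coprod C'$ (and $A\mapsto A\coprod C'\coprod C''$) not on a single object but simultaneously across an entire gap diagram in the $\Gap([n],-)$ direction and across an entire string of weak equivalences in the $w$ direction, compatibly with faces, degeneracies, and the coCartesian constraints defining $\Gap$. Concretely this requires fixing a functor $\C\times N(\Ar[2])\to\C$ implementing the coproduct choices, as in the proof of Theorem \ref{thm:wald_main}, and then checking, \emph{using conditions (3) and (4)}, that applying it to a $\D$-liftable diagram yields a genuine $\Gap([\bullet],\D)$- and $w\D$-diagram. This is precisely the point at which Barwick invoked the additivity theorem to split the path space; the present argument replaces that input by the explicit homotopy $\psi$ together with the structural conditions (3) and (4), after which Lemma \ref{lem:mainlem} concludes the proof.
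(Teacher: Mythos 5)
Your global strategy --- pass to the covering spaces classified by \(K_0(\C)\) and the image \(H\) of \(K_0(\D)\), reduce the statement about \(K_n\) to a weak equivalence of covers, and replace Barwick's additivity argument by an explicit coproduct-shift homotopy checked against Theorem \ref{the_main} and Lemma \ref{lem:mainlem} --- agrees with the paper up to the point where the homotopy must actually be constructed. But there is a genuine gap at exactly the step you flag as ``the main obstacle,'' and it is not a coherence problem that a fixed functor \(\C\times N(\Ar[2])\to\C\) together with conditions (3) and (4) can repair. On the diagonal of \(([n],[m])\mapsto (w_m\Gap([n],\C))\), a \(k\)-simplex is a string of \(k\) \emph{labeled equivalences} between gap diagrams on \([k]\), and a homotopy \(K\times\Delta^1\to \mathrm{diag}\) must send every simplex of \(K\times\Delta^1\) to such a string. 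The interpolating simplices of your homotopy from \(\phi\) to \(\psi\) necessarily contain, in the string (\(w\)-)direction, transition maps of the form \(A\to A\coprod C'\) and \(A\to A\coprod C'\coprod C''\). Such maps are ingressive --- which is why the same recipe is legal inside \(s_*(\C,G)\) in Theorem \ref{thm:wald_main}, where only the gap/ingressive structure constrains them --- but they are \emph{not} weak equivalences unless \(C'\) is trivial. Hence your homotopy simply does not land in the diagonal of \(w_\bullet\Gap([\bullet],\C)\): the diagonal couples the two simplicial directions, and the coproduct-shift homotopy is only admissible in the \(S_\bullet\)-direction. No choice of coherence data, and no appeal to (3) or (4), changes the fact that a coproduct inclusion is not a labeled equivalence.

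The paper circumvents this by never shifting in the \(w\)-direction. It fixes the \(w\)-degree \(m\), forms the auxiliary Waldhausen \(\infty\)-category \(\E(\C,m)\subset\Func(\Delta^m,\C)\) whose \emph{objects} are strings of \(m\) labeled equivalences, and identifies \((w_m\Gap([n],\C))_n\) with \(s_*(\E(\C,m))\); Theorem \ref{thm:wald_main} is then applied verbatim to the set of objects \(T=\Obj(\E(\D,m))\), so the shift happens only in the \(n\)-direction, where coproduct inclusions are ingressive and thus allowed. Conditions (1)--(3) enter in verifying the hypotheses of Theorem \ref{thm:wald_main} for \(T\): in particular (3) is used object-by-object, through a telescoping choice of equivalences \(D_i\to D_i'\), to complete a given string \(C_0\to\cdots\to C_m\) to one lying in \(\Obj(\E(\D,m))\); it cannot be invoked ``functorially across a simplex'' as your outline requires. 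Condition (4) identifies the resulting cover \(s_*(T,K_0(\C))\) with the actual \(\D\)-side bisimplicial set, as you say. Finally, the degreewise (in \(m\)) weak equivalences assemble to a weak equivalence of realizations by the realization lemma for bisimplicial sets, which is what replaces your diagonal argument. If you restructure your proof this way --- degreewise in the labeled direction, quoting Theorem \ref{thm:wald_main} as a black box --- the remainder of your outline, including the covering-space bookkeeping for the \(n=0\) injectivity, goes through.
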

\begin{proof}
The first part of the proof follows the idea in the proof of Waldhausen's cofinality theorem (\cite[Proposition 1.5.9.]{Wa}).
We will construct an infinity analog of \(A(m,w)\) and prove the statement degreewise.
Let \(w_m\C\) be the set of \(m\)-simplices of \(w\C\), and \(\E(\C,m)\) be a full subcategory of the \(\infty\)-category \(\Func(\Delta^m,\C)\) spanned by elements of \(w_m\C\).
Define a subcategory \(\E(\C,m)_\dagger\) of the \(\infty\)-category \(\E(\C,m)\) as follows.
Its set of objects is the same as that of \(\E(\C,m)\).
A morphism \((C_0\to\cdots\to C_m)\to (C_0'\to\cdots\to C_m')\) of \(\E(\C,m)\) is a morphism of \(\E(\C,m)_\dagger\) if \(C_i\to C_i'\) in \(\C_\dagger\) for all \(i\).
The pair \((\E(\C,m),\E(\C,m)_\dagger)\) forms a Waldhausen \(\infty\)-category.
The simplicial set \(s_*(E(\C,m))\) can be identified with \((w_m\Gap([n],\C))_n\).
The map of sets \(\Obj(E(\C,m))\to K_0(\C): (C_0\to \cdots \to C_m)\mapsto [C_0]\) induces a surjective homomorphism of groups \(\pi_1(s_*(E(\C,m)),0)\to K_0(\C)\).
The double simplicial set \((s_n(E(\C,m),K_0(\C)))_{n,m}\) is a covering space of \((w_m\Gap([n],\C))_{n,m}\).
Similarly for \(\D\).
Let \(H\) be the image of \(K_0(\D)\to K_0(\C)\).
The set \(\Obj(E(\D,m))\) can be identified with a set of objects of \(E(\C,m)\) and condition (4) asserts that \((s_n(E(\D,m),H))_{n,m}=(s_n(\Obj(E(\D,m)),K_0(\C)))_{n,m}\).
We will show that the natural morphism \((s_n(\Obj(E(\D,m)),K_0(\C)))_{n,m}\to (s_n(E(\C,m),K_0(\C)))_{n,m}\) is an isomorphism.
If it is shown, the statement of this theorem follows since \((s_n(E(\D,m),H))_{n,m}\) and \((s_n(E(\C,m),K_0(\C)))_{n,m}\) are covering space of \((s_n(E(\D,m)))_{n,m}\) and \((s_n(E(\C,m)))_{n,m}\), respectively.

Coproducts of \(E(\C,m)\) are coproducts of \(\C\) in each index.
So \(\Obj(E(\D,m))\) is closed under coproducts in \(E(\C,m)\).
Let \(C_0\to \cdots\to C_m\) be an object of \(E(\C,m)\).
Take \(C_0'\in\C\) as in (2).
There exists \(C_i'\in \Obj(E(\D,m))\) for each \(i\ge 1\) such that \(C_i\coprod C_0'\coprod C_i'\in \Obj(E(\D,m))\) since \([C_i\coprod C_0']=[C_i]+[C_0']=[C_0]+[C_0']\in H\).
Set \(C'=C_0'\coprod \cdots\coprod C_m'\).
For each \(i=0,\ldots,m-1\), take a morphism \(D_{i}\to D_{i}'\) in \(w\D\) such that \(C_{i}\coprod C_0'\coprod D_i \to C_{i+1}\coprod C_0'\coprod D_{i}'\) is in \(w\D\).
The coproduct
\beq
(C_0\to\cdots\to C_m)\coprod (C'\coprod D_0\coprod D_1\coprod\cdots \coprod D_m&\os{\Id}{\to}& C'\coprod D_0'\coprod D_1\coprod\cdots \coprod D_m\os{\Id}{\to}\\
&\cdots&\os{\Id}{\to} C'\coprod D_0'\coprod\cdots \coprod D_{m-1}'\coprod D_m)
\eeq
is in \(\Obj(E(\D,m))\).
If \([C_0]\in H\), we can take \(C_0\) as an object of \(\Obj(\D)\), then the right-hand side becomes an element of \(\Obj(E(\D,m))\).
Therefore, we can apply the Theorem \ref{thm:wald_main} to \(\Obj(E(\D,m))\).
\end{proof}

\begin{rem}
  There is no condition (2) in the original statement of Barwick's theorem.
  He used the additivity theorem and the Segal condition, and reduced it to Waldhausen's cofinality theorem.
  I think we need some conditions instead of Waldhausen's condition.
  If \(\D\) is closed under extension in \(\C\), there is no problem.
  By Grayson's trick, condition (1) automatically implies condition (2).
\end{rem}

\appendix
\section{Relations between other cofinality theorems}\label{append}
There are various types of cofinality theorems of algebraic \(K\)-theory.
Here, we introduce three classical cofinality theorems, Waldhausen's cofinality theorem, Thomason-Trobaugh cofinality theorem, and Grayson-Staffeldt cofinality theorem.
Lastly, we demonstrate that Barwick's cofinality theorem for labeled Waldhausen \(\infty\)-categories encompasses all the cofinality theorems previously discussed.

In Waldhausen's original paper, there were some mistakes in the proof of his cofinality theorem, and the statement was wrong.
We have corrected the statement.

\begin{defn}
  Let \(F:\C\to \D\) be a exact functor between Waldhausen \(\infty\)-categories.
  The functor \(F\) is said strictly cofinal if for any \(D\in \D\), there exists \(C,C'\in \C\) such that \(F(C)\cong D\coprod F(C')\).
\end{defn}

\begin{thm}[{Waldhausen, \cite[Proposition 1.5.9.]{Wa}}]\label{thm:WC}
Let \(\C\) be a classical Waldhausen category and \(\D\) be its Waldhausen subcategory.
We assume the following conditions.
\begin{itemize}
\item[(A)] The Waldhausen subcategory \(\D\) is closed under extension. This means that for any cofibration sequence \(C_0\rinj C_1\rsurj C_2\), if \(C_0,C_2\in \D\), then \(C_1\in \D\). Moreover, the morphisms \(C_0\to C_1,C_1\to C_2\) belong to \(\D\).
\item[(B)] The inclusion \(\D\subset \C\) is strictly cofinal.
\item[(C)] If \(C_0\to C_1\) is a weak equivalence in \(\C\), and \(C_0,C_1\in \D\), then there exists a weak equivalence \(C_0'\to C_1'\) in \(\D\) such that \(C_0\coprod C_0'\to C_1\coprod C_1'\) is a morphism in \(\D\).
\end{itemize}
Then the morphism \(K(\D)\to K(\C)\) is a weak homotopy equivalence.
\end{thm}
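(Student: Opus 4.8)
The plan is to derive this from Barwick's cofinality theorem (Theorem \ref{thm:TheCofinality_Wald}) by realizing the classical data as labeled Waldhausen \(\infty\)-categories and translating the hypotheses (A)--(C) into Barwick's (1)--(4). First I would pass to nerves: equip \(N\C\) with ingressives coming from the cofibrations of \(\C\) and labels coming from the weak equivalences, and likewise for \(\D\). I would then check that this is a labeled Waldhausen \(\infty\)-category whose \(K\)-theory \(\Omega|(w\Gap([n],\C))_n|\) agrees with the classical \(K(\C)\); concretely, the set \(s_n\C\) of objects of \(\Gap([n],\C)\) recovers Waldhausen's \(S_n\C\), and the label structure recovers \(wS_\bullet\C\). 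Using condition (C), the weak equivalences in \(\D\) are exactly those of \(\C\) lying in \(\D\), so \(\D\) becomes a labeled Waldhausen subcategory in the sense of this paper.

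The hypothesis-matching is then mostly formal. Strict cofinality (B) provides, for each \(C\in\C\), objects \(D,D'\in\D\) with \(D\cong C\coprod D'\); taking \(C'=D'\) gives Barwick's (1), and in fact (2) as well, since \(C'\) already lies in \(\D\) irrespective of the \(K_0\)-condition. Condition (C) is literally Barwick's (3) once weak equivalences are identified with labeled morphisms and ``essential image of \(w\D\)'' with weak equivalences lying in \(\D\). For (4), a \(2\)-simplex \(A\in\Gap([2],\C)\) is a cofibration sequence \(A_{0,1}\rinj A_{0,2}\rsurj A_{1,2}\), whose three \(1\)-faces are \(A_{0,1},A_{0,2},A_{1,2}\); assuming all three lie in the essential image of \(\D\), closure under extension (A) forces both the ingressive \(A_{0,1}\to A_{0,2}\) and the quotient morphism into \(\D\), so \(A\in\Gap([2],\D)\). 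This is precisely the content of (4).

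Having verified (1)--(4), Barwick's theorem yields that \(K_n(\D)\to K_n(\C)\) is an isomorphism for \(n\ge 1\) and injective for \(n=0\). To promote this to a full weak homotopy equivalence I would invoke strict cofinality once more: \([C]=[D]-[D']\) lies in the image of \(K_0(\D)\) for every \(C\in\C\), so \(K_0(\D)\to K_0(\C)\) is surjective, hence bijective. Together with the isomorphisms in positive degrees, this shows \(K(\D)\to K(\C)\) is a weak equivalence.

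I expect the genuine obstacle to lie in the comparison step, not in the hypothesis-matching. One must know that the \(1\)-categorical pushouts along cofibrations in the classical Waldhausen category model \(\infty\)-categorical pushouts in \(N\C\), so that \(N\C\) genuinely satisfies Barwick's axioms and the two \(K\)-theories coincide. Since a general classical Waldhausen category need not have its cofibration pushouts be homotopy coCartesian, here I would either cite a known comparison result or restrict to Waldhausen categories in which the relevant squares are homotopy coCartesian.
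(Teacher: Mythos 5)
Your reduction is precisely the paper's own proof: the paper deduces Theorem \ref{thm:WC} from Theorem \ref{thm:TheCofinality_Wald} by the same dictionary, namely (B) gives Barwick's conditions (1) and (2), (C) gives (3), and (A) gives (4). Your additional observation that strict cofinality makes \(K_0(\D)\to K_0(\C)\) surjective (hence bijective, which is what upgrades Barwick's conclusion to a weak homotopy equivalence of \(K\)-theory spaces) is a step the paper leaves implicit, and it is correct.

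One correction concerning the ``genuine obstacle'' in your last paragraph: it is not there. The labeled Waldhausen \(\infty\)-category attached to a classical Waldhausen category is the nerve \(N\C\) itself, with ingressives the cofibrations and labels the weak equivalences; it is \emph{not} a localization of \(\C\) at the weak equivalences. Colimits in the nerve of an ordinary category indexed by nerves of ordinary categories are computed \(1\)-categorically, so pushouts along cofibrations in \(\C\) are genuine \(\infty\)-categorical pushouts in \(N\C\), and Barwick's axioms hold with no homotopy-coCartesian hypothesis; likewise \(w\Gap([n],N\C)\) recovers Waldhausen's \(wS_\bullet\C\) construction up to equivalence, which is the content of the paper's remark that a classical Waldhausen category is naturally a labeled Waldhausen \(\infty\)-category (and of Barwick's own comparison). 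In particular, your fallback of restricting to Waldhausen categories whose cofibration pushouts are homotopy coCartesian should be avoided: it would prove a strictly weaker statement than Theorem \ref{thm:WC}, whereas citing the comparison result is all that is needed.
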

\begin{rem}
  For condition (A), simply having \(C_1\in \D\) is not sufficient.
  \cite{Za} provides a counterexample.
  
  Condition (C) is trivial if weak equivalences are isomorphisms in \(\C\), but it is crucial in general.
  If we lack this condition, the statement will be false.
  We provide a counterexample.
\end{rem}

\begin{cf}
  Let \(\mathrm{fin.Set}_*\) be the classical category of finite pointed sets.
  We define the category \(\C\) with the set of objects \(\Obj(\mathrm{fin.Set}_*)\times\Obj(\mathrm{fin.Set}_*)\), and morphisms from \((A,B)\) to \((A',B')\) are pairs \((f,g)\), where \(f:A\to A'\) and \(g:B\to A'\vee B'\) are maps of pointed sets.
  \(\C\) is a Waldhausen category with cofibrations as split injections, and a morphism \((f,g):(A,B)\to (A',B')\) is a weak equivalence if \((f\vee \{*\},g):A\vee B\to A'\vee B'\) is a bijection.
  Let \(\C'\subset \C\) be the subcategory whose set of objects are \(\Obj(\C')=\Obj(\C)\), and a morphism \((f,g):(A,B)\to (A',B')\) in \(\C\) is a morphism in \(\C'\) if \(g\) factors through \(B'\to A'\vee B'\).
  The subcategory \(\C'\) is a Waldhausen subcategory and satisfies all conditions of the above theorem but (C).
  The category \(\C'\) is categorically equivalent to \(\mathrm{fin.Set}_*\times \mathrm{fin.Set}_*\), hence \(K_0(\C')\cong\intg\oplus \intg\).
  But \(K_0(\C)\cong\intg\) since every object in \(\C\) is weak equivalent to a object of the form \((A,\{*\})\).
  Therefore, \(K_0(\C')\to K_0(\C)\) cannot be injective.
\end{cf}

\begin{rem}
  Ullman suggests adding a fullness condition to the original Waldhausen's statement (\cite[8.2.6.]{Ul}).
  Our statement is a bit stronger than that.
\end{rem}

\begin{thm}[Thomason-Trobaugh, {\cite[Theorem 1.10.1.]{TT}}]\label{thm:TTC}
Let \(\C\) be a classical Waldhausen category with a cylinder functor satisfying a cylinder axiom, and \(G\) be an abelian group admitting a surjective homomorphism of groups \(\pi:K_0(\C)\to G\).
Let \(\C^w\subset \C\) be a full subcategory spanned by the objects \(C\in \C\) such that \(\pi[C]=0\) in \(G\).
The subcategory \(\C^w\) be a Waldhausen subcategory of \(\C\).
Then there is a homotopy fiber sequence
\[
K(\C^w)\to K(\C)\to G,
\]
where \(G\) is a Eilenberg-MacLane space \(K(G,0)\cong \coprod_{G}\{*\}\).
\end{thm}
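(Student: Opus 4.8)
The plan is to deduce this fibration sequence from the author's form of Barwick's cofinality theorem (Theorem~\ref{thm:TheCofinality_Wald}) applied to the full subcategory \(\C^w\subset\C\), together with a hands-on computation of \(K_0\). The essential input from the cylinder functor and the cylinder axiom is a suspension: for each object \(C\) one forms a cofiber sequence \(C\rinj \mathrm{Cone}(C)\rsurj \Sigma C\) in which \(\mathrm{Cone}(C)\) is weakly contractible, so that \([\mathrm{Cone}(C)]=0\) and hence \([\Sigma C]=-[C]\) in \(K_0(\C)\). I would first record this, viewing \((\C,w\C)\) as a labeled Waldhausen \(\infty\)-category through its nerve so that Theorem~\ref{thm:TheCofinality_Wald} applies; note that \(\C^w\) is a labeled Waldhausen subcategory because it is full and inherits its weak equivalences from \(\C\).

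Next I would verify the four hypotheses of Theorem~\ref{thm:TheCofinality_Wald} for \(\D=\C^w\). Cofinality (condition (1)) is exactly where the suspension is used: \(\pi[C\coprod\Sigma C]=\pi[C]-\pi[C]=0\), so \(C\coprod\Sigma C\in\C^w\). Condition (2) is immediate, since \([C]\) lying in the image of \(K_0(\C^w)\) forces \(\pi[C]=0\), whence \(C\in\C^w\) and one takes \(C'=0\). Condition (3) is vacuous here: as \(\C^w\) is full, any weak equivalence between its objects already lies in \(w\C^w\). Condition (4) is the statement that \(\C^w\) is closed under extensions, which holds because \(\pi\) is additive along cofiber sequences, so a \(\Gap([2],\C)\)-object with all \(1\)-faces in \(\C^w\) itself lies in \(\C^w\).

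Applying Theorem~\ref{thm:TheCofinality_Wald} then yields \(K_n(\C^w)\os{\sim}{\to}K_n(\C)\) for \(n\ge 1\) and injectivity of \(K_0(\C^w)\to K_0(\C)\). To finish the long exact sequence of the proposed fibration I would identify the cokernel with \(G\): the image of \(K_0(\C^w)\) lies in \(\ker\pi\) because objects of \(\C^w\) have vanishing class, and conversely any \(x=[C]-[D]\in\ker\pi\) equals \([C\coprod\Sigma D]\) with \(\pi[C\coprod\Sigma D]=0\), so \(C\coprod\Sigma D\in\C^w\) and \(x\) is in the image. With surjectivity of \(\pi\) given, this shows \(0\to K_0(\C^w)\to K_0(\C)\os{\pi}{\to}G\to 0\) is exact.

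Finally, since \(G=K(G,0)\) is discrete, the map \(K(\C)\to K(G,0)\) is determined by \(\pi\) on \(\pi_0\), and its homotopy fiber \(F\) has \(\pi_0F=\ker\pi\) and \(\pi_nF=K_n(\C)\) for \(n\ge 1\). The three preceding steps show the natural map \(K(\C^w)\to F\) is an isomorphism on all homotopy groups, and because \(K\)-theory spaces are infinite loop spaces this \(\pi_*\)-isomorphism is a weak equivalence, giving the fiber sequence. The main obstacle is the first step: making precise the passage from a classical Waldhausen category with cylinder functor to the labeled Waldhausen \(\infty\)-categorical setting of Theorem~\ref{thm:TheCofinality_Wald}, and checking that the cylinder axiom really produces a suspension with \([\Sigma C]=-[C]\) compatibly with the label structure. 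Everything after that is formal.
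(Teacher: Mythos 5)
Your proposal is correct and takes essentially the same route as the paper: the paper likewise obtains Theorem \ref{thm:TTC} by applying Theorem \ref{thm:TheCofinality_Wald} to \(\D=\C^w\) and verifying conditions (1)--(4) exactly as you do (the suspension \(C\coprod\Sigma C\in\C^w\) from the cylinder functor for (1), triviality of (2), fullness of \(\C^w\) for (3), and closure under extensions for (4)). Your extra steps identifying the cokernel of \(K_0(\C^w)\to K_0(\C)\) with \(G\) and assembling the homotopy fiber sequence over the discrete space \(K(G,0)\) simply make explicit what the paper leaves implicit.
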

\begin{rem}
  Thomason and Trobaugh assert stronger cofinality in the same paper (\cite[Exercise 1.10.2.]{TT}).
  However, the conditions of their cofinality theorem are not sufficient.
  Ullmann corrected the statement by adding some conditions.
  He asserts that condition (c) of the theorem below is necessary and provides a counterexample in that case except for the cylinder functor (e) (\cite[Theorem 8.2.1.]{Ul}).
  He also asserts that the fullness condition is necessary (\cite[8.2.6.]{Ul}).
  We provide a counterexample (Example \ref{cf:Ullman2}) for that case.  
  That is a triangulated version of the example given in \cite{Za}.
\end{rem}

\begin{thm}[Waldhausen-Thomason-Ullman, {\cite[Theorem 8.2.1.]{Ul}}]\label{thm:UC}
  Let \(\C\) and \(\D\) be classical Waldhausen categories.
  Suppose \(\C\) is a full subcategory of \(\D\) satisfying the following conditions.
  \begin{itemize}
  \item[(a)] \(\C\) is a Waldhausen subcategory of \(\D\). Especially, a morphism in \(\C\) is a cofibration if and only if it is a cofibration in \(\D\) with cokernel in \(\C\).
  \item[(b)] A map in \(\C\) is a weak equivalence if and only if it is one in \(\D\).
  \item[(c)] Every object in \(\D\) which is weakly equivalent to an object in \(\C\) is itself in \(\C\).
  \item[(d)] If \(X\to Y\to Z\) is a cofiber sequence in \(\D\) and \(X,Z\) are in \(\C\), then \(Y\) is in \(\C\).
  \item[(e)] \(\D\) has mapping cylinders satisfying the cylinder axiom and \(\C\) is closed under them.
  \item[(f)] For every object \(X\) in \(\D\) there is an object \(\ol{X}\) in \(\D\) such that \(X\coprod \ol{X}\) is in \(\C\).
  \end{itemize}
  Then \[K(\C)\to K(\D)\to K_0(\D)/K_0(\C)\]
  is a homotopy fiber sequence of connective spectra.
\end{thm}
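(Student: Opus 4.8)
The plan is to deduce Theorem~\ref{thm:UC} from Barwick's cofinality theorem (Theorem~\ref{thm:TheCofinality_Wald}) after passing to the \(\infty\)-categorical setting. First I would unwind what the asserted fiber sequence actually says. Writing \(G=K_0(\D)/K_0(\C)\), the third term is the Eilenberg--MacLane spectrum \(HG\) concentrated in degree \(0\) (the spectral incarnation of the \(K(G,0)\) appearing in Theorem~\ref{thm:TTC}), so the long exact sequence of the homotopy fiber sequence \(K(\C)\to K(\D)\to HG\) collapses to isomorphisms \(K_n(\C)\xrightarrow{\ \sim\ }K_n(\D)\) for all \(n\ge 1\) together with a short exact sequence \(0\to K_0(\C)\to K_0(\D)\to G\to 0\). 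The right-exactness and the identification of the cokernel with \(G\) are immediate from the definition of \(G\), so the entire content of the theorem is the statement that \(K(\C)\to K(\D)\) is an isomorphism on \(K_n\) for \(n\ge 1\) and injective on \(K_0\). This is exactly the conclusion of Theorem~\ref{thm:TheCofinality_Wald}, read with \(\C\) (the subcategory here) playing the role of Barwick's ``\(\D\)'' and \(\D\) (the ambient category here) playing the role of Barwick's ``\(\C\)''.

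It remains to produce labeled Waldhausen \(\infty\)-categories realizing this dictionary and to check Barwick's hypotheses. I would replace the classical Waldhausen categories \(\C\subset\D\) by the labeled Waldhausen \(\infty\)-categories obtained from their nerves, with ingressives the nerves of cofibrations and weak equivalences the nerves of the classical weak equivalences. Conditions (a) and (b) are then precisely what is needed for \(\C\) to be a \emph{labeled Waldhausen subcategory} of \(\D\) in the sense defined before Theorem~\ref{thm:TheCofinality_Wald}: condition (a) says a morphism of \(\C\) is ingressive if and only if it is ingressive in \(\D\) with cokernel in \(\C\), and condition (b) says a morphism of \(\C\) is a weak equivalence if and only if its image in \(\D\) is.

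Next I would match the six hypotheses of Theorem~\ref{thm:UC} with the four hypotheses of Theorem~\ref{thm:TheCofinality_Wald} (again exchanging the roles of \(\C\) and \(\D\)). Condition (f) is Barwick's condition (1); Barwick's condition (2) then follows automatically by Grayson's trick, as recorded in the remark following Theorem~\ref{thm:TheCofinality_Wald}. Barwick's condition (3) is essentially vacuous here: given a weak equivalence \(X_0\to X_1\) in \(\D\) with \(X_0,X_1\in\C\), condition (b) shows it is already a weak equivalence in \(\C\), so one may take the auxiliary objects to be \(0\). Barwick's condition (4) concerns a \(\Gap([2],\cdot)\)-diagram, i.e.\ a cofiber sequence together with its coCartesian gap square; under the cofibration dictionary of condition (a), the hypothesis that its \(1\)-faces lie in \(\C\) and the conclusion that the whole diagram lies in \(\C\) are guaranteed by conditions (a) and (d). With (1)--(4) verified, Theorem~\ref{thm:TheCofinality_Wald} applies and yields the isomorphism/injectivity statement isolated in the first paragraph.

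The main obstacle I anticipate is not the verification of Barwick's hypotheses---which is largely formal---but the comparison of \(K\)-theories: one must know that the classical Waldhausen \(K\)-theory \(K(\C)\), built from the \(S_\bullet\)-construction, agrees with Barwick's \(\infty\)-categorical \(K\)-theory of the associated labeled Waldhausen \(\infty\)-category. This is precisely the point at which the cylinder functor and the cylinder axiom of condition (e) enter: they guarantee that inverting the weak equivalences is compatible with the \(S_\bullet\)-construction, so that the two models of \(K\)-theory coincide and the conclusion of Theorem~\ref{thm:TheCofinality_Wald} can be transported back to the connective spectra appearing in the statement. Some care is also needed to confirm that the nerve of a classical Waldhausen category genuinely satisfies the axioms of a Waldhausen \(\infty\)-category, but this is standard.
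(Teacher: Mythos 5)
Your proposal takes exactly the paper's route: the paper proves this theorem in the appendix precisely by reducing it to Theorem \ref{thm:TheCofinality_Wald}, translating the classical Waldhausen categories into labeled Waldhausen \(\infty\)-categories via their nerves and matching hypotheses just as you do --- (f) gives Barwick's (1), Grayson's trick (which needs the extension-closure (d)) gives (2), fullness with (b) (the paper also cites (c)) gives (3), and fullness with (a)/(d) gives (4). The one point worth correcting is your final paragraph: condition (e) is not where the \(K\)-theory comparison lives --- in this paper's framework the \(K\)-theory of the labeled Waldhausen \(\infty\)-category attached to a classical Waldhausen category is by construction \(\Omega|(w\Gap([n],\C))_n|\), and since coCartesian squares in the nerve of an ordinary category are ordinary pushouts this is literally the classical \(wS_\bullet\)-construction, so the comparison is definitional, (e) goes entirely unused in the reduction, and its redundancy is part of why Barwick's theorem is strictly stronger than Ullmann's.
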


\begin{rem}
  This theorem is stronger than Theorem \ref{thm:TTC}.
\end{rem}

\begin{cf}[A triangulated version of \cite{Za}]\label{cf:Ullman2}
  Let \(\C\) be the category whose objects are \(\Obj(\Ch^b(\intg)\times\Ch^b(\intg))\), and morphisms from \((A,B)\) to \((A',B')\) are maps of chain complexes \(f:A\oplus B\to A'\oplus B'\).
  The category \(\C\) is equivalent to the category \(\Ch^b(\intg)\) via the functor \((A,B)\mapsto A\oplus B\).
  The category \(\Ch^b(\intg)\) has the standard Waldhausen structure and standard cylinder functor as mapping cylinder satisfying the cylinder axiom, so is \(\C\).
  The category \(\D=\Ch^b(\intg)\times\Ch^b(\intg)\) is a Waldhausen subcategory of \(\C\) closed under the cylinder functor.
  The pair \(\C,\D\) satisfies all conditions of Theorem \cite{Ul} except for the fullness.
  The \(K\)-groups are \(K_i(\D)\cong K_i(\intg)\oplus K_i(\intg),\ K_i(\C)\cong K_i(\intg)\).
  Hence the natural morphism \(K_i(\D)\to K_i(\C)\) cannot be isomorphisms in higher degrees.
\end{cf}

\begin{thm}[Grayson-Staffeldt, {\cite[Theorem 1.1]{LMA}, \cite[Theorem 6.1]{ESA}, \cite[Theorem 2.1.]{St}}]\label{thm:GSC}
Let \(\C\) be a classical exact category, and \(\D\) be its exact subcategory.
Suppose \(\D\) is cofinal and closed under extensions in \(\C\).
Then there is a homotopy fibration sequence
\[
K(\C)\to K(\C) \to K_0(\C)/K_0(\D).
\]
\end{thm}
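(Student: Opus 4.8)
The plan is to deduce the Grayson--Staffeldt statement from Barwick's cofinality theorem (Theorem \ref{thm:TheCofinality_Wald}); here the target sequence is read as \(K(\D)\to K(\C)\to K_0(\C)/K_0(\D)\). First I would recall that a classical exact category \(\C\) gives rise to a Waldhausen \(\infty\)-category by taking the nerve of \(\C\) with the admissible monomorphisms as ingressives, and that equipping it with the trivial labeling \(w\C=i\C\) (so that the weak equivalences are exactly the isomorphisms) makes it a labeled Waldhausen \(\infty\)-category whose \(K\)-theory space agrees with the Quillen \(K\)-theory of \(\C\) via the Gillet--Waldhausen comparison. Since \(\D\subset\C\) is a full exact subcategory closed under extensions, a morphism of \(\D\) is an admissible monomorphism precisely when it is one in \(\C\) with cokernel in \(\D\); hence \(\D\) is a labeled Waldhausen subcategory of \(\C\) in the sense of the preceding section.

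Next I would verify the four hypotheses of Theorem \ref{thm:TheCofinality_Wald}. Condition (1) is exactly cofinality of \(\D\) in \(\C\): for every \(C\) there is \(C'\) with \(C\oplus C'\in\D\). Condition (2) is then automatic, because \(\D\) is closed under extensions and Grayson's trick (as noted in the remark following Theorem \ref{thm:wald_main}) promotes condition (1) to condition (2). Condition (3) is vacuous here: the weak equivalences are isomorphisms and \(\D\) is full, so any labeled morphism \(C_0\to C_1\) between objects of \(\D\) already lies in \(w\D\), and one takes \(C_0'=C_1'=0\). Condition (4) is the crux: an object \(A\in\Gap([2],\C)\) is a short exact sequence \(A_{0,1}\rinj A_{0,2}\rsurj A_{1,2}\), and the hypothesis that its three \(1\)-faces lie in the image of \(\Gap([1],\D)\) says precisely that \(A_{0,1},A_{0,2},A_{1,2}\in\D\); closure of \(\D\) under extensions then forces the whole sequence, together with its structure maps, into \(\Gap([2],\D)\).

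Having checked the hypotheses, Theorem \ref{thm:TheCofinality_Wald} yields that \(K_n(\D)\to K_n(\C)\) is an isomorphism for \(n\ge1\) and injective for \(n=0\). Finally I would upgrade this to the asserted fibration sequence. The natural map \(K(\C)\to K_0(\C)\twoheadrightarrow K_0(\C)/K_0(\D)\) to the Eilenberg--MacLane object \(H(K_0(\C)/K_0(\D))\) has a homotopy fiber \(F\) with \(\pi_n F\cong K_n(\C)\) for \(n\ge1\) and \(\pi_0 F\cong\ker\bigl(K_0(\C)\to K_0(\C)/K_0(\D)\bigr)=\operatorname{im}(K_0(\D)\to K_0(\C))\). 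The composite \(K(\D)\to K(\C)\to H(K_0(\C)/K_0(\D))\) is null, so \(K(\D)\) factors through \(F\), and the comparison \(K(\D)\to F\) is an isomorphism on all homotopy groups by the previous step: surjectivity on \(\pi_0\) is built in, while injectivity follows from injectivity of \(K_0(\D)\to K_0(\C)\). Hence \(K(\D)\simeq F\), which is exactly the claimed homotopy fibration.

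The main obstacle I anticipate is not any single inference above but the bookkeeping required to align conventions: ensuring that the Quillen \(K\)-theory of \(\C\) genuinely coincides with the \(s_*\)-construction \(K\)-theory of its nerve as a labeled Waldhausen \(\infty\)-category, and that the exact-category notions of ``cofinal'' and ``closed under extensions'' translate cleanly into the Waldhausen-subcategory hypotheses. In particular, the correct identification of the three \(1\)-faces of an object of \(\Gap([2],\C)\) with the subobject, quotient, and total object of a short exact sequence is precisely what makes condition (4) equivalent to closure under extensions, and this is the point I would write out most carefully.
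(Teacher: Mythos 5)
Your proposal is correct and takes essentially the same route as the paper: the paper likewise deduces Grayson--Staffeldt from Theorem \ref{thm:TheCofinality_Wald} by viewing the exact category as a (labeled) Waldhausen category with admissible monomorphisms as cofibrations and isomorphisms as weak equivalences, and verifies conditions (1)--(4) exactly as you do --- (1) from cofinality, (2) from Grayson's trick, (3) trivially since labeled morphisms are isomorphisms, (4) from closure under extensions. Your final step upgrading the ``isomorphism for \(n\ge 1\), injection for \(n=0\)'' conclusion to the stated homotopy fibration sequence via the Eilenberg--MacLane target is left implicit in the paper's one-line verification, but it is correct and worth writing out.
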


We state that Theorem \ref{thm:TheCofinality_Wald} induces all the above theorems.
\begin{thm}
Theorem \ref{thm:TheCofinality_Wald} induces Theorems \ref{thm:WC}, \ref{thm:TTC}, \ref{thm:UC}, and \ref{thm:GSC}. 
\end{thm}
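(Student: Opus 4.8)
The plan is to realize each classical statement as an instance of Theorem~\ref{thm:TheCofinality_Wald} by passing from a classical Waldhausen (or exact) category \(\A\) to an associated Waldhausen \(\infty\)-category \(N\A\), using that the classical \(K\)-theory space agrees with the \(\infty\)-categorical \(K(N\A)\) built from \((w\Gap([n],N\A))_n\). Under this dictionary the subcategory inclusion \(\D\subset\C\) (or \(\C\subset\D\)) of each theorem becomes a labeled Waldhausen subcategory, and the conclusion of Theorem~\ref{thm:TheCofinality_Wald}, namely that \(K(\D)_n\to K(\C)_n\) is an isomorphism for \(n\ge 1\) and injective for \(n=0\), is exactly the cofinality assertion once one identifies the cofibre with the discrete group appearing in each fibre sequence. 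For the group-theoretic base of those fibre sequences I would use the covering space \(s_*(\C,G)\) introduced before Theorem~\ref{thm:wald_main}: choosing \(G=K_0(\C)/K_0(\D)\) (respectively the prescribed quotient \(G\) in the Thomason--Trobaugh case), the covering \(s_*(\C,G)\to s_*(\C)\) realizes \(G\) through its fundamental group, and the weak equivalence \(s_*(T,G)\to s_*(\C,G)\) supplied by Theorem~\ref{thm:wald_main}, together with the resulting identification of the cofibre with \(K(G,0)\), converts the injection on \(\pi_0\) into the stated homotopy fibre sequence \(K(\D)\to K(\C)\to K(G,0)\).

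Next I would check, theorem by theorem, that the classical hypotheses imply conditions (1)--(4) of Theorem~\ref{thm:TheCofinality_Wald}. The cofinality inputs --- strict cofinality (B) for Theorem~\ref{thm:WC}, hypothesis (f) for Theorem~\ref{thm:UC}, the surjection \(\pi\) for Theorem~\ref{thm:TTC}, and the cofinality assumption of Theorem~\ref{thm:GSC} --- all yield condition (1) directly. Condition (2) is the \(K_0\)-splitting requirement; by the remark following Theorem~\ref{thm:TheCofinality_Wald} (Grayson's trick) it is automatic once \(\D\) is closed under extensions, which holds in every case here, so it needs no separate verification. Condition (3), the lifting of labeled equivalences, translates into hypothesis (C) of Theorem~\ref{thm:WC} and into the conjunction of (b),(c) with the cylinder functor in Theorems~\ref{thm:TTC} and~\ref{thm:UC}; for the exact category of Theorem~\ref{thm:GSC} weak equivalences are isomorphisms and condition (3) is vacuous.

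Condition (4), the Gap condition asserting that a \(2\)-simplex all of whose \(1\)-faces lie in \(\Gap([1],\D)\) already lies in \(\Gap([2],\D)\), is precisely what the closed-under-extensions hypotheses (A), (d), and the extension-closedness of Theorem~\ref{thm:GSC} are designed to provide: a gap object over \([2]\) encodes a cofiber sequence together with its two sub-ingressives, so extension-closedness forces the middle term and the structure maps into \(\D\). Finally I would translate the conclusions. For Theorem~\ref{thm:WC} strict cofinality makes \(K_0(\D)\to K_0(\C)\) surjective as well, so the injection of Theorem~\ref{thm:TheCofinality_Wald} in degree \(0\) becomes an isomorphism and \(K(\D)\to K(\C)\) is a weak homotopy equivalence; for the remaining three, the covering-space description of the first paragraph converts ``isomorphism above degree \(0\), injection in degree \(0\)'' into the stated homotopy fibre sequences with discrete base.

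I expect the main obstacle to be the faithful translation of the classical extension/cofiber-sequence data into the \(\infty\)-categorical Gap formalism needed for condition (4): one must verify that the coCartesian-square condition defining \(\Gap([2],\C)\) corresponds on the nose to an admissible cofiber sequence in the classical sense, and that closure under extensions really forces the entire gap diagram --- not merely its vertices --- into \(\D\). A secondary difficulty is the cylinder-functor bookkeeping required for condition (3) in the Thomason--Trobaugh and Ullman cases, where one exploits the cylinder axiom to replace an arbitrary weak equivalence between objects of \(\D\) by one strict enough to be lifted, together with the comparison between classical and \(\infty\)-categorical \(K\)-theory spaces, which I would invoke as an established input rather than reprove.
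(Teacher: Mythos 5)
Your proposal is correct and follows essentially the same route as the paper: regard each classical Waldhausen (or exact) category as a labeled Waldhausen \(\infty\)-category and verify conditions (1)--(4) of Theorem \ref{thm:TheCofinality_Wald} case by case, with cofinality giving (1), Grayson's trick plus extension-closure giving (2), fullness/(C)/(b)--(c) giving (3), and extension-closure giving (4). The only deviations are cosmetic: for Theorem \ref{thm:WC} the paper gets (2) directly from strict cofinality (B) rather than via Grayson's trick, and for Theorem \ref{thm:TTC} condition (3) is trivial by fullness of \(\C^w\), so the cylinder-functor bookkeeping you anticipate there is not actually needed.
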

\begin{proof}
We remark that a classical Waldhausen category is naturally a labeled Waldhausen \(\infty\)-category.

(Theorem \ref{thm:WC})
Condition (B) of Theorem \ref{thm:WC} implies conditions (1) and (2) of Theorem \ref{thm:TheCofinality_Wald}.
(C) and (A) imply (3) and (4), respectively.

(Theorem \ref{thm:TTC})
For any \(C\in \C\), \(C\coprod\Sigma C\in \D\).
This implies (1).
(2) is trivial.
(3) is also trivial since \(\D\) is a full subcategory of \(\C\).
Since \(D\) is closed under extensions, (4) applied.

(Theorem \ref{thm:UC})
(1) is implied by (f).
(2) is implied by (d), fullness and Grayson's trick (\cite[Section 1]{LMA} or \cite[Exercise II.9.14]{We}).
(3) is implied by (b),(c) and fullness.
(4) is implied by (d) and fullness.

(Theorem \ref{thm:GSC})
An exact category is naturally a Waldhausen category.
Cofibrations are admissible injections, and weak equivalences are isomorphisms.
(1) follows from the cofinality.
(4) follows since \(\D\) is closed under extensions.
(3) is trivial because labeled morphisms are equivalences.
(2) follows from Grayson's trick.
\end{proof}

\vspace{0.1cm}

Department of Mathematics, Faculty of Science, Hokkaido University

Kita 10, Nishi 8, Kita-Ku, Sapporo, Hokkaido, 060-0810, Japan

Email: matsukawa.hisato.f4@elms.hokudai.ac.jp

\end{document}